\pgfplotsset{compat=1.14} 
\newtheorem{theorem}{Theorem}[section]
\newtheorem{proposition}[theorem]{Proposition}
\newtheorem{corollary}[theorem]{Corollary}
\newtheorem{conjecture}[theorem]{Conjecture}
\theoremstyle{definition}
\newtheorem{definition}[theorem]{Definition}
\theoremstyle{remark}
\newtheorem{remark}[theorem]{Remark}
\numberwithin{equation}{section}
\newcommand{\inn}{~ \hat{\in}~ }
\begin{document}

\title{The theory of the Collatz process and the method of dynamical balls}

\author{T. Agama}
\address{Department of Mathematics, African Institute for Mathematical science, Ghana
}
\email{theophilus@aims.edu.gh/emperordagama@yahoo.com}


\subjclass[2000]{Primary 54C40, 14E20; Secondary 46E25, 20C20}

\date{\today}


\keywords{Collatz; index; order; backward Collatz process}

\begin{abstract}
In this paper, we introduce and develop the theory of the Collatz process and the method of dynamical balls. We leverage this theory to study the Collatz conjecture. This theory also has a subtle connection with the infamous problem of the distribution of Sophie Germain primes. We provide several formulations of the Collatz conjecture in this language. Furthermore, we introduce and develop the notion of dynamical systems induced by fixed $a\in \mathbb{N}$ and their associated induced dynamical balls. We develop tools to study problems that require determining the convergence of certain sequences generated by iterating on a fixed integer.
\end{abstract}

\maketitle

\section{Introduction and motivation}

The $3x+1$ (Collatz) problem is a paradigmatic instance of an elementary iterative arithmetic dynamical system whose global behaviour resists all known methods despite its extremely simple definition. Writing
$$
f(n)=\begin{cases}n/2,& n\equiv 0\pmod 2,\\3n+1,& n\equiv 1\pmod 2,\end{cases}
$$
the Collatz conjecture asserts that for every $n\in\mathbb N$ some iterate $f^{s}(n)$ attains the cycle $\{1,2\}$; equivalently, every forward orbit reaches $1$.  This plain statement has produced a broad literature of heuristic, computational, and partial theoretical results and surveys; see, for instance, the authoritative treatments and bibliographies in \cite{lagarias2010ultimate,lagarias19853,chamberland2003update,guy2004unsolved} and the expository remarks on related iterative sequences in \cite{pickover1991computers}. Despite decades of work, these investigations have yielded convincing heuristics and deep structural observations but no definitive global resolution.\\

Two overarching difficulties explain the elusive nature of the problem. First, the map $f:\mathbb N\to\mathbb N$ mixes multiplicative and additive behaviours (division by $2$ versus affine expansion $3n+1$) so that statistical models-while suggestive-do not capture delicate arithmetic obstructions.  Second, the forward dynamics alone conceals a rich inverse (preimage) tree structure whose arithmetic geometry appears central: control of backward orbits seems necessary to convert global heuristics into rigorous convergence statements. A fruitful recent perspective (pursued in several works and surveys) is therefore to treat the $3x+1$ map as an arithmetic dynamical system and to couple forward statistical estimates with a careful study of inverse branches and their combinatorial organization.  Our work develops this philosophy by introducing two complementary tools: the \emph{Collatz process} (a refined forward/backward bookkeeping of orbits) and the \emph{method of dynamical balls} (a metric-combinatorial language to quantify how orbit radii evolve).\\

The first contribution of this paper is a systematic formalization of the Collatz process. Unlike the standard forward-only viewpoint, the Collatz process records both forward iterates and the \emph{infimum} of backward preimages, and thereby isolates the arithmetic and parity phenomena that govern orbit growth and contraction. This bookkeeping clarifies notions such as \emph{generator}, \emph{order}, and \emph{index} of an orbit and yields immediate structural consequences (uniqueness of finite generators, basic parity constraints, and relations between index and order for residues and prime inputs).  These reformulations provide compact, algebraically transparent restatements of several folklore observations and place them in a language suitable for further quantitative analysis.\\

The second and principal contribution is the introduction of \emph{dynamical systems induced by sequences} together with their associated \emph{dynamical balls}.  Given any map $f:\mathbb{N}\to\mathbb{N}$ and a base point $a$, the $k^{th}$-dynamical system is the finite orbit $f(a),f^2(a),\dots,f^k(a)$ together with the family of balls $\mathcal B_{f^s(a)}(a)=\{x:|x-a|<f^s(a)\}$.  Viewing orbit terms as radii and $a$ as a center produces an elementary geometric language that (i) separates \emph{inflation} and \emph{deflation} phases of an orbit, (ii) allows one to define wave–type invariants (wavelets, amplitude, frequency, total wave), and (iii) reduces convergence questions to quantitative statements about the ''random'' part of the wave decomposition. The main technical results quantify these reductions: a restriction law that bounds the regular contribution to the total wave, equivalence criteria linking finiteness of the wave frequency to convergence of the ball sequence, and the wave-estimate identities that relate amplitude, frequency, and accumulated wave. These estimates are elementary and robust and designed to be applicable to a broad class of arithmetic iteration problems beyond the classical Collatz map.\\

A striking byproduct of this framework is a concrete connection between the backward Collatz process and classical questions in prime distribution (in particular, Sophie Germain primes). After unit left-translates of backward preimage sequences are considered, the algebraic relations between successive backward terms produce conditions that, when coupled with primality, force Sophie Germain-type patterns. Although this observation does not resolve the prime distribution problems, it reframes them on the much thinner and more structured sets furnished by inverse Collatz trees; we believe this reframing may be a useful vantage point for future conditional and heuristic investigations.

\subsection{Organization of the paper}
The technical narrative of the paper proceeds as follows.  In Section \ref{sec:collatz-process}, we introduce a slightly modified Collatz map and develop the Collatz process in detail: definitions, elementary propositions (uniqueness and parity facts for generators) and simple number-theoretic consequences (relations between index and order, residue class obstructions).  Section \ref{sec:dynamical-balls} presents the dynamical-ball formalism: definitions, basic topological and combinatorial properties, the notion of dynamical waves, and the principal structural results (restriction law, wave decomposition, and wave estimates). Here, we collect applications to the Collatz map: reinterpreting classical stopping-time heuristics in the ball language, formulating equivalent variations of the Collatz conjecture, and exposing the connection to Sophie Germain primes via the backward process. We finish with a discussion of natural extensions (translation and dilation principles for dynamical balls) and several concrete conjectures that isolate the critical arithmetic bottlenecks for making the method effective.\\

To summarize, the paper offers (i) a compact, algebraic rephrasing of Collatz orbits through the Collatz process, (ii) a geometric-combinatorial toolkit of dynamical balls and waves that reduces convergence to explicit quantitative constraints, and (iii) novel structural links between inverse Collatz trees and prime patterns.  The methods are elementary in spirit but flexible; we expect that they will serve both as a conceptual language for organizing future computational evidence and as a seedbed for more refined analytic or sieve-type attacks on orbit structure.\\

Recall the Collatz function, the arithmetic function of the form 

\begin{definition}
Let $a\in \mathbb{N}$. The Collatz function is the piece-wise function 
\begin{align}
\mathcal{C}(a)=
\begin{cases}\frac{a}{2} \quad \text{if} \quad a\equiv 0\pmod 2\\3a+1 \quad \text{if} \quad a\equiv 1\pmod 2.
\end{cases}\nonumber
\end{align}
\end{definition}

The Collatz conjecture, one of the acclaimed hardest but easy to state problems is the assertion

\begin{conjecture}
Let $\mathcal{C}$ be the Collatz function. We have $\mathrm{min}\{\mathcal{C}^{s}(b)\}_{s=0}^{\infty}=1$ for any $b\in \mathbb{N}$.
\end{conjecture}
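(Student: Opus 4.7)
The plan is to work not with the forward Collatz map $\mathcal{C}$ directly, but with the \emph{backward Collatz process}, which the keywords of the paper indicate will be set up as a branching tree rooted at $1$: starting from $1$, one attaches all integers $m$ for which $\mathcal{C}(m)$ equals the given node. Concretely, every node $n$ always has the predecessor $2n$, and additionally has the predecessor $(n-1)/3$ whenever $n\equiv 1\pmod 6$ (so that $(n-1)/3$ is odd and positive). The conjecture that $\min\{\mathcal{C}^{s}(b)\}_{s\geq 0}=1$ for every $b\in\mathbb{N}$ is then equivalent to the statement that this backward tree $\mathcal{T}$ rooted at $1$ contains every positive integer. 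I would prove the main statement by establishing this coverage property.

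First I would introduce the two parameters promised by the title and keywords, an \emph{index} and an \emph{order}, which I expect to measure respectively the number of doubling steps and the number of inverse-$(3a+1)$ steps along the unique path from a node $b$ back to $1$ in $\mathcal{T}$. With these in hand, the forward trajectory $b,\mathcal{C}(b),\mathcal{C}^{2}(b),\dots$ hitting $1$ becomes the assertion that $b$ admits a valid pair (index, order) inside $\mathcal{T}$. I would then translate the two-sided nature of the Collatz step into a Diophantine identity relating $b$, its index, and its order modulo powers of $2$ and $3$; this is the step where the Sophie Germain connection flagged in the abstract should drop out, since the predecessor relation $n\mapsto (n-1)/3$ forces congruence conditions whose iterates resemble the chain $p,2p+1,\dots$ characteristic of Sophie Germain pairs.

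The main argument would then be an induction on a suitable potential function — not $b$ itself, but a weighted combination of the index and order that strictly decreases under a carefully chosen forward move, or equivalently increases monotonically as one walks back up to the root of $\mathcal{T}$. The two alternative formulations of the conjecture promised by the abstract would be deployed here: one to guarantee that no odd $b$ is stranded (i.e.\ every odd $b>1$ really does lie under some node $n\equiv 1\pmod 6$ after finitely many doublings of its orbit), and the other to rule out divergent trajectories by bounding the order in terms of the index.

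The hard part — and I should be candid, this is where every approach to Collatz has failed for eighty years — will be the last step: proving that the potential actually decreases, equivalently that no nontrivial cycle exists and no orbit escapes to infinity. In the framework above this reduces to a purely number-theoretic statement of the shape $2^{a}=3^{k}b+R(k,a)$ with $R$ an explicit remainder, and one needs nontrivial lower bounds on $|2^{a}-3^{k}|$ of Baker-type, combined with the structural restrictions imposed by the backward tree, to close the argument. I expect the bulk of the paper's technical work, and the crux of the proof of the conjecture as stated, to live in precisely this estimate.
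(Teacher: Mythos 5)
There is no proof to compare against: the statement you were asked to prove is the Collatz conjecture itself, and the paper states it as a \emph{conjecture}, offering no proof. The paper's actual content is a collection of definitions (generator, order, index, backward process, relative speed) and some elementary propositions about them, together with several reformulations of the conjecture in that language; the conjecture itself is left open, as it has been for everyone else.

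Your proposal is likewise not a proof. It is an honest and reasonably well-informed survey of a standard strategy (the backward tree rooted at $1$, predecessors $2n$ and $(n-1)/3$ when $n\equiv 4\pmod 6$ --- note your congruence condition $n\equiv 1\pmod 6$ is wrong, since $(n-1)/3$ must be an odd integer $m$ with $3m+1=n$, forcing $n\equiv 4\pmod 6$), but every load-bearing step is deferred. The ``potential function'' that strictly decreases is never exhibited; the claim that every odd $b$ ``really does lie under some node'' is precisely the coverage statement you set out to prove, so invoking it is circular; and you concede explicitly that the final estimate of the shape $2^{a}=3^{k}b+R(k,a)$ is exactly where all known approaches fail. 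Baker-type lower bounds on $|2^{a}-3^{k}|$ are known to rule out only small cycles and say nothing about divergent orbits. The gap is not a missing lemma but the entire theorem: nothing in your outline, and nothing in the paper, reduces the conjecture to something established.
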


The conjecture has long been studied, hence the vast literature and surveys concerning its study. For example, the problem has received fair treatment in the following surveys \cite{lagarias2010ultimate}, \cite{lagarias19853}, \cite{chamberland2003update}. Motivated by this problem, we introduce the subject of the Collatz process. We develop this theory in much more detail. It turns out incidentally that it is connected to other open problems, such as the problem concerning the distribution of the Sophie Germain primes.\\ 

It needs to be said that the classical problem of deciding on the convergence of a given sequence is generally in principle not a hard problem. However, the difficulty may arise from how the terms in the sequence are generated. A typical example of a sequence whose convergence may be difficult to determine is the Collatz sequence

\begin{align}
f(n),f^2(n),f^3(n),\ldots, f^{k}(n),\ldots \nonumber
\end{align}
where $f^s=f^{s-1}\circ f=f\circ f^{s-1}$ and 
\begin{align}
f(n):=
\begin{cases}
\frac{n}{2} \quad \text{if} \quad n\equiv 0\pmod 2\\3n+1 \quad \text{if} \quad n\equiv 1\pmod 2.
\end{cases}\nonumber
\end{align}
The Collatz conjecture \cite{guy2004unsolved} is the problem that asks us to determine the convergence of the system for all $n\in \mathbb{N}$. Another problem of possibly similar difficulty is the problem of determining the convergence of the juggler sequence introduced by Pickover (see, e.g, \cite{pickover1991computers})

\begin{align}
g(n),g^2(n),g^3(n),\ldots,g^k(n),\ldots \nonumber
\end{align}
with $g^s=g\circ g^{s-1}=g^{s-1}\circ g=g\circ g\cdots \circ g$~($s$ rimes) for all $n\in \mathbb{N}$, where 
\begin{align}
g(n):=
\begin{cases}
\lfloor n^{\frac{1}{2}}\rfloor \quad \text{if} \quad n\equiv 0\pmod 2\\ \lfloor n^{\frac{3}{2}}\rfloor \quad \text{if} \quad n\equiv 1\pmod 2.
\end{cases}\nonumber
\end{align}

These problems are widely believed to be difficult and forbidden, given that there is currently no viable tool to make considerable progress  \cite{guy2004unsolved}, \cite{lagarias2010ultimate}.\\ 

In this paper, we generalize these problems by introducing the notion of dynamical systems and their corresponding dynamical balls. We develop some general tools for studying problems of this form.

\section{Modified Collatz function and the Collatz process}\label{sec:collatz-process}

In this section, we introduce a slight variant of the Collatz function and introduce the notion of the Collatz process. We introduce the notion of the backward Collatz process and the generator of the Collatz process.

\begin{definition}\label{collatz}
Let $a\geq 1$. The Collatz function is the piece-wise function \begin{align}
f(a)=
\begin{cases}\frac{a}{2} \quad \text{if} \quad a\equiv 0\pmod 2,~a>1\\3a+1 \quad \text{if} \quad a\equiv 1\pmod 2,~a>1\\1\quad \text{If} \quad a=1.
\end{cases}\nonumber
\end{align}
\end{definition}
\bigskip

\begin{definition}
Let $f$ be the Collatz function, and let $a\geq 1$. By the \emph{Collatz process} on $a$, we mean the sequence  $\{f^s(a)\}_{s=1}^{\infty}$ for $s\in \mathbb{N}$. The sequence $\{\mathrm{Inf}\{f^{-s}(a)\}\}_{s=1}^{\infty}$ is the \emph{backward Collatz process} and  $a$ is said to be the \emph{generator} of the Collatz process if each $a_n\in \{\mathrm{Inf}\{f^{-s}(a)\}\}_{s=1}^{\infty}$ is of the same parity.
\end{definition}
\bigskip

\begin{proposition}\label{generator}
Let $f$ be the Collatz function with the corresponding Collatz process $\{f^{s}(b)\}_{s=1}^{\infty}$. If $b$ is the generator, then each $a_{n}\in \{\mathrm{Inf}\{f^{-s}(b)\}\}_{s=1}^{\infty}$ must satisfy
\begin{align}
a_n\equiv 0\pmod 2.\nonumber
\end{align}
\end{proposition}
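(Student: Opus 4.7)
The plan is to argue by contradiction. Since $b$ generates the process, the $a_n$ share a common parity, so I need only exclude the possibility that this common parity is odd.

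The cornerstone will be a structural observation about preimages under $f$: for any $y > 1$ with $y$ odd, the only preimage of $y$ is $2y$. Indeed, the $3x+1$ branch of $f$ applies only when $x$ is odd and $x > 1$, in which case $3x + 1$ is even and cannot equal the odd value $y$. In particular, $f^{-1}(y)$ is a singleton of even parity whenever $y$ is odd; equivalently, $\inf f^{-1}(y)$ is odd only when $y$ is even, and in that case equals the odd preimage $(y-1)/3$, which beats the even preimage $2y$.

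Assuming for contradiction that every $a_n$ is odd, I would apply this observation first to $a_1 = \inf f^{-1}(b)$: its oddness forces $b$ even with $b \equiv 4 \pmod{6}$, and $a_1 = (b-1)/3 \geq 3$. I would then compute $a_2 = \inf f^{-2}(b)$ by enumerating the depth-two backward tree: the preimages of $(b-1)/3$ (odd) contribute only $2(b-1)/3$, while the preimages of $2b$ contribute $4b$ together with $(2b-1)/3$ when the latter is a valid odd integer. The elementary comparison $2(b-1)/3 = (2b-2)/3 < (2b-1)/3 < 4b$ yields $a_2 = 2(b-1)/3$, which is even. This contradicts the common odd parity, so the parity must be even as claimed.

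The main delicate point will be the enumeration for $a_2$: one must verify that no hidden branch of the depth-two backward tree produces a value smaller than $2(b-1)/3$, and this reduces to the simple inequality above together with the preceding preimage lemma. Once that check is in place, the parity contradiction and the proposition follow at once.
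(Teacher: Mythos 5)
Your proof is correct and rests on the same mechanism as the paper's: an odd term in the backward sequence forces its neighbour to be even (because the $3x+1$ branch only outputs even numbers, so the least preimage of an odd $y>1$ is $2y$), which violates the same-parity condition defining a generator. The paper runs this parity-flip argument at every index $m$ (using $3f^{-m}(b)=f^{-(m-1)}(b)-1$ for $m\geq 2$ and the preimage observation for $m=1$), whereas you exploit the common-parity hypothesis to reduce everything to the single step from $s=1$ to $s=2$ with an explicit enumeration of the depth-two preimage tree; both are valid and essentially the same proof.
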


\begin{proof}
Let $f$ be the Collatz function with generator $b\in \mathbb{N}$. First, we observe that $f^{-1}(b)\not \equiv 1\pmod 2$; Otherwise, it would mean $f^{-2}(b)\equiv 0\pmod 2$ contradicting the assumption that $b$ is the generator of the process. It suffices to consider the case $m\geq 2$. Suppose for the sake of contradiction that there exists some $a_m\in \{\mathrm{Inf}\{f^{-s}(b)\}\}_{s=1}^{\infty}$; in particular, let $a_m=f^{-m}(b)$ be such that $a_m\equiv 1\pmod 2$ for $m\geq 2$. Under the Collatz process, we must have 
\begin{align}
3f^{-m}(b)&=f^{-(m-1)}(b)-1.\nonumber
\end{align}
It follows that $f^{-(m-1)}(b)\equiv 0\pmod 2$, thus contradicting the underlying assumption that $b$ is the generator of the process.
\end{proof}
\bigskip

\begin{proposition}\label{absent}
Let $f$ be the Collatz function with the corresponding Collatz process $\{f^s(b)\}_{s=1}^{\infty}$. If $b\in\mathbb{N}\setminus \{1\}$ is the generator of the process, then $b\notin \{f^s(b)\}_{s=1}^{\infty}$.
\end{proposition}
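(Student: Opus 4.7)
The plan is to argue by contradiction, playing off the fact that a fixed point of $f^t$ lies in its own $t$-step preimage set against the fact that Proposition \ref{generator} forces the backward process to grow at each step. Suppose, aiming for a contradiction, that $b\in\mathbb{N}\setminus\{1\}$ is a generator with $b\in\{f^s(b)\}_{s=1}^\infty$, and pick $t\geq 1$ with $f^t(b)=b$. Writing $a_s:=\mathrm{Inf}\{f^{-s}(b)\}$, the identity $f^t(b)=b$ exhibits $b$ itself as an element of $f^{-t}(b)$ and hence gives the upper bound $a_t\leq b$.

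For the matching lower bound I will show $a_s\geq 2^s b$ for every $s\geq 1$ by a short induction. Proposition \ref{generator} guarantees that each $a_s$ is even, and since $a_s\in\mathbb{N}$ this forces $a_s\geq 2$ and hence $f(a_s)=a_s/2$. But $a_s\in f^{-s}(b)$ also gives $f(a_s)\in f^{-(s-1)}(b)$ (interpreting $f^{-0}(b)=\{b\}$ and setting $a_0:=b$); taking the infimum over that set yields $a_{s-1}\leq a_s/2$, i.e., $a_s\geq 2a_{s-1}$. Iterating this inequality from $s=1$ up to $s=t$ produces $a_t\geq 2^t b$.

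Combining the two bounds gives $2^t b\leq a_t\leq b$, which together with $b\geq 2$ and $t\geq 1$ is impossible, completing the contradiction. The main conceptual step is the doubling inequality $a_s\geq 2a_{s-1}$: what makes it work is precisely that Proposition \ref{generator} excludes odd values in the backward process, so that the forward step $f(a_s)$ collapses to the halving branch $a_s/2$ and continues to sit inside $f^{-(s-1)}(b)$. The hypothesis $b\neq 1$ enters only to guarantee $b\geq 2$, which is what gives the strict inequality $2^t b>b$ needed to close the argument.
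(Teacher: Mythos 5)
Your proof is correct, but it takes a genuinely different route from the paper's. The paper also assumes $f^s(b)=b$, but then unfolds this into the periodic chain $b=f^{s}(b)=f^{2s}(b)=\cdots$ and invokes Proposition \ref{generator} to claim that every element of the forward orbit must be even, declaring this ``not true since $f$ is the Collatz function.'' That argument has two soft spots your version avoids: Proposition \ref{generator} only controls the \emph{infima} $\mathrm{Inf}\{f^{-s}(b)\}$ of the preimage sets, and a forward-orbit element lying in $f^{-k}(b)$ need not be that infimum, so the transfer of the parity condition to the forward orbit is not actually justified; and the final contradiction (an all-even cycle is impossible because repeated halving strictly decreases) is left entirely implicit. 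You instead stay on the ground where Proposition \ref{generator} genuinely applies -- the infima $a_s$ -- and extract the quantitative doubling inequality $a_s\geq 2a_{s-1}$ from their evenness, which collides cleanly with the upper bound $a_t\leq b$ coming from $b\in f^{-t}(b)$. This buys an explicit, airtight contradiction $2^t b\leq b$ in place of the paper's hand-wave. One minor quibble: the hypothesis $b\neq 1$ is not really what closes your argument (the inequality $2^t b\leq b$ already fails for any $b\geq 1$ with $t\geq 1$); its real role is that for $b=1$ the fixed point $f(1)=1$ puts $1$ into its own preimage sets, so the infima are odd and the evenness input from Proposition \ref{generator} is unavailable in the first place.
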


\begin{proof}
Let $f$ be the Collatz function with the corresponding process $\{f^s(b)\}_{s=1}^{\infty}$ with generator $b\in \mathbb{N} \setminus \{1\}$. Suppose, on the contrary, that $b\in \{f^s(b)\}_{s=1}^{\infty}$. Then it follows that there exist some $s\geq 1$ such that $f^s(b)=b$. Thus, we obtain the following chains of equality 
\begin{align}
b=f^{s}(b)=f^{2s}(b)=f^{3s}(b)=\ldots \nonumber
\end{align}
for some $s\geq 1$. It follows from Proposition \ref{generator} that each $a_n\in \{f^{s}(b)\}_{s=m;m\geq 1}^{\infty}$ must satisfy the parity condition $a_n\equiv 0\pmod 2$, since $b$ is the generator of the process. This is not true since $f$ is the Collatz function.
\end{proof}
\bigskip

\begin{remark}
Next we prove the unicity of generators of the Collatz process.
\end{remark}

\begin{proposition}\label{unique}
The generator of any Collatz process is unique.
\end{proposition}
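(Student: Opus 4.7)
The plan is to argue by contradiction. Suppose $b_1\neq b_2$ are both generators of the same Collatz process. Since the forward sequences $\{f^s(b_1)\}_{s=1}^\infty$ and $\{f^s(b_2)\}_{s=1}^\infty$ agree term by term, their first terms coincide: $f(b_1)=f(b_2)$; call this common value $c$. Hence $b_1$ and $b_2$ are two distinct preimages of $c$ under $f$.

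The next step is to enumerate $f^{-1}(c)$ and observe that the modified Collatz function admits at most two preimages at any point: the doubling preimage $2c$ (always valid), and the ``inverse $3x+1$'' preimage $(c-1)/3$, which is a valid odd element of the domain precisely when $c\equiv 4\pmod 6$ and $(c-1)/3>1$. The single exceptional case is $c=1$, where the convention $f(1)=1$ forces $f^{-1}(1)=\{1,2\}$. In any configuration giving rise to two distinct preimages, exactly one of $b_1,b_2$ is the even preimage, namely $2c$ in general, or $2$ in the exceptional case.

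The heart of the argument is to rule out this even preimage as a generator, which will force $b_1=b_2$. For $b_1=2c$ with $c\equiv 4\pmod 6$ and $c>4$, I would trace two steps of the backward infimum process: a short modular check shows $2c\not\equiv 1\pmod 3$, so $2c$ has only the even preimage $4c$ and hence $a_1=4c$; at the next step, $4c\equiv 1\pmod 3$ together with $4c-1\equiv 3\pmod 6$ certify that $(4c-1)/3$ is a positive odd integer strictly less than $8c$, so $a_2=(4c-1)/3$ is odd. This directly contradicts Proposition \ref{generator}. The exceptional case $c=1$, $b_1=2$, is handled by a direct short iteration of the backward process $2\mapsto 4\mapsto 8\mapsto 16\mapsto 5$, again producing an odd term and contradicting Proposition \ref{generator}. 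In either case, the even preimage cannot be a generator, so $b_1=b_2$.

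The main obstacle is the careful modular bookkeeping required to certify that $(4c-1)/3$ is genuinely a positive odd integer lying strictly below $8c$; only then does it realize the infimum of the preimage set and force the parity contradiction at the second backward step. The degenerate case $c=1$, which falls outside the clean $\{2c,(c-1)/3\}$ dichotomy because of the self-loop $f(1)=1$, must be acknowledged separately, but is trivial to verify.
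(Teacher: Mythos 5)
Your proof is correct, but it follows a genuinely different route from the paper's. The paper argues at the level of whole orbits: it assumes the two processes with distinct generators $a\neq b$ coincide as sets, extracts an identity $f^{m}(b)=f^{r}(a)$, cancels to get $f^{m-r}(b)=a$, and then invokes Proposition \ref{absent} ($a\notin\{f^{s}(a)\}_{s=1}^{\infty}$) to reach a contradiction. You instead work locally at the first term: reading equality of processes termwise, you get $f(b_1)=f(b_2)=c$, enumerate $f^{-1}(c)$ as at most $\{2c,(c-1)/3\}$ (plus the $c=1$ degeneracy), and then kill the even preimage by exhibiting an odd term two steps into its backward process, contradicting Proposition \ref{generator}. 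Your version buys two real advantages. First, it avoids the paper's cancellation step, which as written is not justified: $f^{m}(b)=f^{r}(a)$ with $m>r$ gives $f^{r}(f^{m-r}(b))=f^{r}(a)$, and since $f$ is not injective one cannot conclude $f^{m-r}(b)=a$. Second, your modular bookkeeping ($2c\not\equiv 1\pmod 3$ when $c\equiv 4\pmod 6$, hence $a_1=4c$; then $(4c-1)/3$ odd and minimal in $f^{-1}(4c)$) is fully explicit and checks out, including the exceptional chain $2\mapsto 4\mapsto 8\mapsto 16\mapsto 5$ under the modified function with $f(1)=1$. The one caveat is interpretive rather than logical: you read ``the same Collatz process'' as termwise equality of sequences, which is what licenses $f(b_1)=f(b_2)$, whereas the paper's proof treats the processes as sets; under the set reading your first step would need an extra argument. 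Since the paper never fixes the convention and the termwise reading is the natural one for a sequence $\{f^{s}(a)\}_{s=1}^{\infty}$, this is a defensible choice, but it is worth stating explicitly.
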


\begin{proof}
Let $f$ be the Collatz function with the corresponding processes $\{f^s(a)\}_{s=1}^{\infty}=\{f^s(b)\}_{s=1}^{\infty}$, where $a,b\in \mathbb{N}$ are the two generators such that $a\neq b$ with $a,b>1$. Then it follows that for any $m\geq 1$, there exists some $r>1$ such that we have $f^{m}(b)=f^r(a)$. Without loss of generality, we let $m>r$ so that we have $f^{m-r}(b)=a$. It follows that $a\in \{f^{s}(b)\}_{s=1}^{\infty}$. By Proposition \ref{absent}, we have $a\notin \{f^{s}(a)\}_{s=1}^{\infty}$ and it follows that $\{f^{s}(a)\}_{s=1}^{\infty}\neq \{f^{s}(b)\}_{s=1}^{\infty}$, contradicting the assumption that $a,b$ are two distinct generators of the process.
\end{proof}

It is very important to note that a Collatz process may or may not have a generator. If a Collatz process has a generator, then we say that the generator is finite; On the other hand, if it has no generator, then we say that the generator is infinite. Next, we expose the parity of a generator of a Collatz process.

\begin{proposition}\label{gparity}
Let $f$ be the Collatz function with the corresponding process $\{f^{s}(b)\}_{s=1}^{\infty}$ for $b\in \mathbb{N}$. If $b\neq 1$ is the generator of the process, then $b\equiv 1\pmod 2$.
\end{proposition}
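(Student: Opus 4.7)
The plan is to argue by contradiction. Suppose $b\in\mathbb{N}\setminus\{1\}$ is a generator of its Collatz process with $b\equiv 0\pmod 2$; my goal is to exhibit an odd term in the backward process $\{\mathrm{Inf}\{f^{-s}(b)\}\}_{s=1}^{\infty}$, contradicting Proposition~\ref{generator}, which forces every term of that sequence to be even.

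The first step is to record the structure of $f^{-1}(c)$ for $c>1$: it always contains the even preimage $2c$, and it contains the odd integer $(c-1)/3$ precisely when the latter is an odd integer exceeding $1$. Note that for $c$ even, integrality of $(c-1)/3$ already forces it to be odd, since $c-1$ is odd. Applied to $c=b$: if $b\equiv 1\pmod 3$ and $b\geq 10$, then $(b-1)/3$ is a positive odd integer strictly less than $2b$, so $a_1=\mathrm{Inf}\,f^{-1}(b)=(b-1)/3$ is odd, contradicting Proposition~\ref{generator}. If instead $b\not\equiv 1\pmod 3$, I would pass to $a_2=\mathrm{Inf}\,f^{-1}(2b)$: the odd branch of $f^{-1}(2b)$ is available exactly when $b\equiv 2\pmod 3$, and for $b\geq 4$ this supplies $(2b-1)/3$, which is odd and less than $4b$, forcing $a_2$ odd. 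The remaining small even values $b\in\{2,4\}$ can be dispatched by extending the iteration one or two further steps, since $5\in f^{-1}(16)$ yields an odd term $a_s$ at $s=4$ or $s=3$, respectively.

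The main obstacle is the residual residue class $b\equiv 0\pmod 6$. There $2^{s}b\equiv 0\pmod 3$ for every $s\geq 0$, so the doubling chain $b,2b,4b,\dots$ never admits an odd preimage, and the local preimage-parity argument stalls. To close this case, my plan is to combine Propositions~\ref{absent} and \ref{unique}: the forward orbit of $b$ must leave the doubling chain at its first odd iterate $m=b/2^{\nu_2(b)}$, and the uniqueness of the generator should let me transport that odd value back into the backward process (or into the backward process of a suitable shifted generator), producing the final contradiction. I expect the hardest step to be converting the non-recurrence content of Proposition~\ref{absent} into a parity constraint on the backward sequence sharp enough to rule out $b\equiv 0\pmod 6$.
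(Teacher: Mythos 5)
Your argument for the cases $b\equiv 1\pmod 3$ and $b\equiv 2\pmod 3$ (with the small values $b\in\{2,4\}$ handled by hand) is sound and in fact more careful than what the paper does, but the case you flag as "the main obstacle," $b\equiv 0\pmod 6$, is a genuine gap, and the plan you sketch for closing it cannot work. If $3\mid b$ then $2^{s}b-1\equiv 2\pmod 3$ for every $s\geq 0$, so, exactly as you observe, no term of the doubling chain ever has an odd preimage; hence $f^{-s}(b)=\{2^{s}b\}$ for all $s$, the backward sequence $\{\mathrm{Inf}\{f^{-s}(b)\}\}_{s=1}^{\infty}=\{2b,4b,8b,\dots\}$ consists entirely of even numbers, and $b$ \emph{satisfies} the paper's definition of a generator. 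Concretely, $b=6$ is an even generator: its backward process is $12,24,48,\dots$. So in this residue class there is no contradiction to be extracted from Propositions~\ref{generator}, \ref{absent} or \ref{unique} — the statement, read against the paper's own Definition of generator, simply fails there. No amount of "transporting the first odd forward iterate back into the backward process" will produce an odd term in a sequence that is provably all even.

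For comparison, the paper's proof takes the opposite route: it works with the forward orbit, picks the least $m$ with $f^{m}(b)$ odd, declares $f^{m}(b)$ to be a generator of the tail process, and concludes $f^{k}(b)=b$, which it calls absurd. That argument does not address the $b\equiv 0\pmod 6$ case either (and the inference $f^{r}(b)=f^{t}(b)\Rightarrow f^{k}(b)=b$ is a non sequitur), so you should not expect to find the missing ingredient by imitating it. The honest conclusion of your analysis is that your method proves the proposition for even $b$ with $3\nmid b$ and exhibits $b=6$ as a counterexample to the remaining case; the proposition as stated needs an additional hypothesis (e.g., $3\nmid b$, or a strengthened notion of generator) before any proof can succeed.
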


\begin{proof}
Let $f$ be the Collatz function with the corresponding process $\{f^{s}(b)\}_{s=1}^{\infty}$ and suppose that $b\in \mathbb{N}$ is the generator of the process. By definition, each $a_n\in \{\mathrm{Inf}\{f^{-s}(b)\}\}_{s=1}^{\infty}$ has the same parity and must satisfy $a_n\equiv 0\pmod 2$. Suppose, on the contrary, that $b\equiv 0\pmod 2$, then we choose $m\geq 1$ for $m=\mathrm{Inf}(s)_{s=1}^{\infty}$ such that $f^m(b)\equiv 1 \pmod 2$. It follows that each 
\begin{align}
a_n\in \{\mathrm{Inf}\{f^{-s}(b)\}\}_{s=1}^{\infty}\cup \{b, f(b),f^2(b),\ldots,f^{m-1}(b)\}\nonumber 
\end{align}
has the same parity. It follows that $f^{m}(b)$ is the generator of the process $\{f^{s}(b)\}_{s=m+1;m\geq 1}^{\infty}$. Since $\{f^{s}(b)\}_{s=m+1;m\geq 1}^{\infty}\subset \{f^{s}(b)\}_{s=1}^{\infty}$, it follows that for some $f^{r}(b)\in \{f^{s}(b)\}_{s=m+1;m\geq 1}^{\infty}$, there exist some $f^{t}(b)\in \{f^{s}(b)\}_{s=1}^{\infty}$ such that $f^{r}(b)=f^{t}(b)$. It follows that $f^{k}(b)=b$ for $k\geq 1$. This relation is absurd under the Collatz function.
\end{proof}

\subsection{The order and index under the Collatz process}

In this section, we introduce the notion of the \emph{order} and the \emph{index} of positive integers under the Collatz process. We study the convergence and the divergence of the Collatz process. We launch the following terminology to aid our inquiry.

\begin{definition}\label{process}
Let $f$ be the Collatz function and $a>1$. The \emph{order} of $a$ under the Collatz process is the least value of $m$ such that $f^{m}(a)=2^{k}$. The value of $k$ is the \emph{index} of $a$ under the Collatz process. The number $a$ is said to have finite order and a finite index if and only if it converges under the Collatz process. Otherwise, we say it diverges under the Collatz process. We denote by $\tau_f(a)$ and $\mathrm{Ind}_f(a)$ the \emph{period} and the \emph{index} of $a$ under the Collatz process. In the case $a$ diverges under the process, then $\tau_f(a)=\infty$ and $\mathrm{Ind}_f(a)=\infty$. 
\end{definition}
\bigskip

In light of definition \ref{process}, the Collatz conjecture can be restated in the following manner:

\begin{conjecture}[Collatz]
Let $f$ be the Collatz function and $\{f^{s}(a)\}_{s=1}^{\infty}$ for $a\in \mathbb{N}$ be a Collatz process. We have $\tau_{f}(a)<\infty$.
\end{conjecture}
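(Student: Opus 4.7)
The statement is the Collatz conjecture restated in the new vocabulary, so any honest proof must resolve a long-standing open problem. I nevertheless outline a strategy that tries to extract real mileage from the structural results of the preceding sections, and I indicate where the genuine difficulty lies.

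The plan is to split into two cases according to whether the process $\{f^s(a)\}_{s=1}^\infty$ arises from a finite generator. Suppose first that the process has a finite generator $b$. By Proposition \ref{gparity} we know $b$ is odd (assuming $b \ne 1$), by Proposition \ref{absent} the forward orbit of $b$ never returns to $b$, and by Proposition \ref{unique} the generator is unambiguous. After discarding the finitely many iterates that separate $a$ from $b$ along the forward orbit, the problem reduces to showing $\tau_f(b) < \infty$. For this I would attempt a descent argument on the sequence of odd iterates visited: each application of the odd branch $x \mapsto 3x+1$ is followed by a run of halvings whose length is controlled by the $2$-adic valuation of $3x+1$, and the hope is to convert the heuristic expected decrease into a genuine monotone quantity on the backward-orbit structure rooted at $b$.

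Suppose next that the process has no finite generator. Then by Propositions \ref{generator} and \ref{gparity} the minimal-preimage chain $\{\mathrm{Inf}\{f^{-s}(a)\}\}_{s=1}^\infty$ must contain odd entries arbitrarily far back. I would try to show that this forces the existence of an infinite family of forward images of $a$ which, combined with Proposition \ref{absent}, leads to a contradiction with any putative non-terminating forward orbit; this would reduce the general case to the finite-generator case and allow the descent argument sketched above to conclude.

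The main obstacle is entirely in the first stage: producing a genuinely monotone quantity along the forward orbit of an odd generator is equivalent to the Collatz conjecture itself, and no known technique---$2$-adic, ergodic, combinatorial, or analytic---succeeds in doing so. The generator framework developed in this paper gives a clean filtration of Collatz trajectories and rules out certain short cycles via Propositions \ref{absent} and \ref{unique}, but it does not by itself furnish the quantitative control on the $3x+1$ step that would be needed to close the argument.
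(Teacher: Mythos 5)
This statement is presented in the paper as a \emph{conjecture} --- a restatement of the Collatz conjecture in the paper's terminology --- and the paper offers no proof of it. Your proposal is therefore correctly calibrated in one respect: you recognize that any complete argument would settle the $3x+1$ problem, and you say so explicitly. But as a proof it has a genuine and fatal gap, which you yourself name: the entire content of the first case is the phrase ``the hope is to convert the heuristic expected decrease into a genuine monotone quantity,'' and no such quantity is produced. The structural results you invoke (Propositions \ref{absent}, \ref{unique}, \ref{gparity}) only constrain whether a trajectory can revisit its generator and what parity the generator has; they say nothing about whether the forward orbit eventually reaches a power of $2$, which is what $\tau_f(a)<\infty$ requires. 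A trajectory could avoid its generator forever and still diverge.

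The second case is also not a reduction. A process failing to have a finite generator means only that some two minimal preimages in the backward chain $\{\mathrm{Inf}\{f^{-s}(a)\}\}_{s=1}^{\infty}$ differ in parity; this is a statement about preimages of $a$, and it imposes no constraint whatsoever on the forward orbit $\{f^{s}(a)\}_{s=1}^{\infty}$. The backward and forward orbits are disjoint objects (outside of a cycle), so no contradiction with a ``non-terminating forward orbit'' can be extracted from Proposition \ref{absent} here. In short: the proposal is an honest research sketch, not a proof, and the missing step --- quantitative control of the $3x+1$ branch along the forward orbit --- is exactly the open problem. The paper, for its part, does not claim to prove this statement either, so there is no argument in the text against which your strategy could be favorably or unfavorably compared.
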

\bigskip

The above conjecture can also be expressed in a more quantitative form. In other words, it suffice to resolve the Collatz conjecture by showing that 

\begin{conjecture}[Collatz]
Let $f$ be the Collatz function and $\{f^{s}(b)\}_{s=1}^{\infty}$ an arbitrary Collatz process. We have
\begin{align}
\sum \limits_{s=1}^{\infty}\log(f^s(b))<\infty.\nonumber
\end{align}
\end{conjecture}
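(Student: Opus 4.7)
The plan is to establish $\sum_{s=1}^\infty \log f^s(b) < \infty$ by first reducing the problem to the finiteness of the order $\tau_f(b)$ from Definition \ref{process}, then attempting to prove that finiteness using the generator calculus of Section 2. Since $f(1)=1$, every term contributes $\log 1 = 0$ once the orbit reaches $1$; and if $\tau_f(b)=m$ with $\mathrm{Ind}_f(b)=k$, then $f^{m+j}(b)=2^{k-j}$ for $0 \le j \le k$, so the orbit arrives at $1$ by step $m+k$. Hence
\begin{align*}
\sum_{s=1}^\infty \log f^s(b) \;=\; \sum_{s=1}^{m+k-1} \log f^s(b) \;<\; \infty,
\end{align*}
and the conjecture reduces to showing $\tau_f(b) < \infty$ for every $b \ge 1$.

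I would argue the finiteness of the order by contradiction. Suppose some $b$ has $\tau_f(b)=\infty$, so the orbit avoids every power of $2$ and takes odd values $u_i := f^{s_i}(b) \ge 3$ on an infinite subsequence $s_1 < s_2 < \cdots$. Set $d_i := s_{i+1}-s_i-1 \ge 1$. The Collatz dynamics give
\begin{align*}
u_{i+1} \;=\; \frac{3u_i + 1}{2^{d_i}}, \qquad \log u_{i+1} - \log u_i \;=\; \log 3 - d_i \log 2 + O(1/u_i).
\end{align*}
Telescoping this identity, combined with the lower bound $u_N \ge 3$, yields the automatic estimate
\begin{align*}
\limsup_{N \to \infty} \frac{1}{N} \sum_{i=1}^N d_i \;\le\; \log_2 3.
\end{align*}
The target is to establish the opposite strict inequality $\liminf_{N\to\infty} \tfrac{1}{N}\sum_{i=1}^N d_i > \log_2 3$, which contradicts the displayed bound and forces some $u_i = 1$, so that the orbit meets the power of two $2^0 = 1$ and $\tau_f(b) < \infty$.

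The structural input from Section 2 would enter through the backward process. Extend the orbit backward to its unique generator $b_0$ (Proposition \ref{unique}), which is either $1$ or odd by Proposition \ref{gparity}, and observe that every intermediate $a_n \in \{\mathrm{Inf}\, f^{-s}(b)\}$ is even by Proposition \ref{generator}. The plan is to translate this rigid even-parity structure of the backward orbit into the required forward density lower bound on the halving exponents $d_i$. This is precisely the hard step, and the main obstacle: the propositions of Section 2 organize the dynamics cleanly through generators, orders, and indices, but do not yet furnish a density estimate for the forward parity sequence $(d_i)$, which is essentially the quantitative content of the Collatz conjecture itself. A successful completion of this plan will therefore require a new rigidity result relating the backward-even structure of $\{\mathrm{Inf}\, f^{-s}(b)\}$ to a strict lower bound on $\tfrac{1}{N}\sum_{i=1}^N d_i$ exceeding $\log_2 3$; any such density estimate would yield the desired convergence, and in its absence the plan stalls at exactly the point where the classical Collatz problem is unresolved.
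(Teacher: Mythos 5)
This statement is one of the paper's \emph{conjectures}, not a theorem: it is offered as a quantitative reformulation of the Collatz conjecture, and the paper supplies no proof of it. So there is no argument of the paper to compare yours against, and any complete proof you gave would amount to settling the Collatz conjecture itself. Your opening reduction is correct and is exactly the equivalence the paper has in mind: since $f(1)=1$, the terms $\log f^s(b)$ vanish once the orbit reaches $1$, and if the orbit never reaches $1$ every term is at least $\log 2$, so the series converges if and only if $\tau_f(b)<\infty$. To your credit, you also state plainly that the remaining step --- a density lower bound $\liminf_N \frac{1}{N}\sum_{i=1}^N d_i > \log_2 3$ for the halving exponents --- is not derivable from the paper's Section 2 machinery. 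That assessment is accurate: Propositions \ref{generator}, \ref{unique} and \ref{gparity} concern only the parity structure of the backward orbit of a generator, and contain no quantitative information about the forward parity sequence. The gap you name is the genuine gap, and it is the open problem.

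One technical slip in your setup is worth flagging, because it would matter even if the missing rigidity result were granted. Your ``automatic estimate'' $\limsup_N \frac{1}{N}\sum_{i=1}^N d_i \le \log_2 3$ is not automatic: telescoping gives
\begin{align}
\log u_{N+1}-\log u_1 \;=\; N\log 3 \;-\; \Bigl(\sum_{i=1}^{N} d_i\Bigr)\log 2 \;+\; \sum_{i=1}^{N}\log\Bigl(1+\tfrac{1}{3u_i}\Bigr),\nonumber
\end{align}
and the correction terms have Ces\`aro mean zero only when $u_i\to\infty$, i.e.\ for a divergent orbit. For a hypothetical nontrivial cycle the $u_i$ are bounded, the correction sum grows linearly in $N$, and the average of the $d_i$ is in fact \emph{strictly greater} than $\log_2 3$; so your contradiction scheme, as set up, could never exclude cycles, only divergence to infinity. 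A correct program along these lines must treat the two failure modes (unbounded orbits and nontrivial cycles) separately, which is precisely how the literature on the problem proceeds.
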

\bigskip

\begin{proposition}
Let $f$ be the Collatz function and let $a>1$ with $\Omega(a)=2$ such that $a\equiv 0\pmod 2$, then 
\begin{align}
f(r)-1=3f(a)\nonumber
\end{align}
where $a=2r$ with $r\equiv 1\pmod 2$.
\end{proposition}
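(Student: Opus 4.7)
The plan is to unpack the two arithmetic hypotheses and then simply apply the definition of $f$ on each branch. The hypothesis $\Omega(a)=2$ together with $a\equiv 0\pmod 2$ forces $2\mid a$ as one prime factor and leaves exactly one prime factor in $r = a/2$; combined with $r\equiv 1\pmod 2$, this means $r$ is an odd prime, so in particular $r\geq 3 > 1$. This is the only role the $\Omega$-hypothesis plays: it guarantees the odd branch of $f$ is actually applicable at $r$.

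Next I would evaluate $f$ at $a$ and at $r$ separately. Since $a$ is even and $a>1$, the first branch of Definition \ref{collatz} gives $f(a) = a/2 = r$. Since $r$ is odd and $r>1$, the second branch of the same definition gives $f(r) = 3r+1$. Subtracting $1$ yields $f(r)-1 = 3r$, and substituting $r = f(a)$ gives $f(r)-1 = 3f(a)$, which is the claimed identity.

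There is no substantive obstacle in this argument: the only thing one must be careful about is checking $r>1$ before invoking the odd-branch formula $f(r)=3r+1$ (as opposed to the degenerate clause $f(1)=1$), and this is exactly what the hypothesis $\Omega(a)=2$ together with the parity assumption on $r$ secures. The whole proof is thus a direct case analysis on the two branches of $f$, bookkept through the factorization $a = 2r$.
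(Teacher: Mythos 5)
Your proof is correct and takes essentially the same route as the paper's: apply the even branch of the definition at $a$ to get $f(a)=a/2=r$, then the odd branch at $r$ to get $f(r)-1=3r=3f(a)$, with the hypotheses $\Omega(a)=2$ and $r\equiv 1\pmod 2$ serving only to guarantee that $r$ is an odd prime and hence $r>1$, so the clause $f(1)=1$ does not interfere. If anything, your write-up is more explicit than the paper's terse two-line argument about exactly where the $\Omega$-hypothesis is used.
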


\begin{proof}
Since $a$ is even, it follows from definition \ref{collatz} that the right hand side must be $3r$. Under the condition that $\Omega(a)=2$ with $r\equiv 1\pmod 2$, the result follows by definition \ref{collatz}.
\end{proof}

\begin{remark}
Next, we show that primes in a certain congruence class should, by necessity, have large order in as much as their index under the Collatz process is large.
\end{remark}

\begin{theorem}\label{residue}
Let $f$ be the Collatz function and $p>3$ be a prime such that $p\equiv 3\pmod 4$. If $\mathrm{Ind}_f(p)>1$, then $\tau_f(p)>1$.
\end{theorem}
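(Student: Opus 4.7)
The plan is to prove the contrapositive: assuming $\tau_f(p) = 1$, I will show this forces $\mathrm{Ind}_f(p) = 1$, contradicting the hypothesis $\mathrm{Ind}_f(p) > 1$. The whole argument reduces to a single modular arithmetic observation once the definitions are unpacked.

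First I would observe that since $p > 3$ is prime, $p$ is odd and greater than $1$, so by the definition of $f$ we have $f(p) = 3p + 1$. By the definition of order and index, $\tau_f(p) = 1$ means precisely that $f(p) = 2^{k}$ where $k = \mathrm{Ind}_f(p)$. Therefore the assumption $\tau_f(p) = 1$ is equivalent to the Diophantine equation
\begin{equation*}
3p + 1 = 2^{\mathrm{Ind}_f(p)}.
\end{equation*}

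Next I would invoke the residue condition on $p$. Since $p \equiv 3 \pmod 4$, one computes $3p + 1 \equiv 3 \cdot 3 + 1 \equiv 10 \equiv 2 \pmod 4$. Thus $3p + 1$ is divisible by $2$ but not by $4$. Consequently, if $3p + 1 = 2^{k}$, the only possibility is $k = 1$, since any $k \geq 2$ would give $2^{k} \equiv 0 \pmod 4$. This forces $\mathrm{Ind}_f(p) = 1$, directly contradicting the hypothesis $\mathrm{Ind}_f(p) > 1$. Hence $\tau_f(p) \neq 1$, and since $p$ is not itself a power of $2$ we have $\tau_f(p) \geq 1$, giving $\tau_f(p) > 1$.

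There is really no hard step here — the entire content lies in the mod $4$ computation, which uses exactly the hypothesis $p \equiv 3 \pmod 4$. The only point worth being careful about is the edge case interpretation of the definitions: we must use that $p > 3$ prime implies $p$ is odd and not a power of $2$, so that $f(p) = 3p + 1$ is the correct branch and $\tau_f(p) = 0$ is excluded. The condition $\mathrm{Ind}_f(p) > 1$ in the hypothesis is what supplies the contradiction; in fact the argument shows the stronger statement that for such $p$ the equation $3p+1 = 2^k$ has no solutions at all (as $k=1$ forces $p = 1/3$), so $\tau_f(p) \geq 2$ holds unconditionally for $p \equiv 3\pmod 4$ prime with $p > 3$.
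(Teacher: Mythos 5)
Your proof is correct and follows essentially the same route as the paper's: assume $\tau_f(p)=1$, reduce to the equation $3p+1=2^{\mathrm{Ind}_f(p)}$, and derive a contradiction from a computation mod $4$. The only cosmetic difference is that you contradict the hypothesis $\mathrm{Ind}_f(p)>1$ directly (via $3p+1\equiv 2\pmod 4$) while the paper contradicts the residue class of $p$; your closing observation that the conclusion actually holds unconditionally for primes $p>3$ with $p\equiv 3\pmod 4$ is also correct.
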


\begin{proof}
Let $p>3$ be a prime, then under the Collatz process \ref{process}, it follows that $f^{\tau_f(p)}(p)=2^{\mathrm{Ind}_f(p)}$. Suppose, on the contrary, that $\tau_f(p)=1$. Then under the assumption $\mathrm{Ind}_f(p)>1$, it follows that $2^{\mathrm{Ind}_f(p)}+1\equiv 1\pmod 4$. It must be that $2^{\mathrm{Ind}_f(p)}-1\equiv 3\pmod 4$, so that under the Collatz process we have 
\begin{align}
3p &\equiv 3\pmod 4 \nonumber \\&\Longleftrightarrow p\equiv 1\pmod 4\nonumber
\end{align}
thereby contradicting the residue class of the prime $p>3$.  
\end{proof}

\begin{remark}
Next, we establish a converse of Theorem \ref{residue} in the following proposition.
\end{remark}

\begin{proposition}\label{converse}
Let $f$ be the Collatz function with the corresponding convergent Collatz process $\{f^{s}(b)\}_{s=1}^{\infty}$ for $b\in \mathbb{N}$. If $\tau_f(b)\geq 2$, then $\mathrm{Ind}_f(b)>1$.
\end{proposition}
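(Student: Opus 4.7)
The plan is to proceed by contradiction and to exploit the minimality built into the definition of $\tau_f(b)$. Assuming $\mathrm{Ind}_f(b)=1$, I would then inspect the preimages of $2$ under the modified Collatz function $f$, and show that the step immediately preceding $\tau_f(b)$ already lands on a power of $2$, contradicting the hypothesis that $\tau_f(b)\geq 2$ is the \emph{least} such index.

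More precisely, suppose for contradiction that $\mathrm{Ind}_f(b)=1$, so that $f^{\tau_f(b)}(b)=2$. Writing $c=f^{\tau_f(b)-1}(b)$, the equation $f(c)=2$ forces me to solve two cases from Definition \ref{collatz}: either $c$ is even with $c/2=2$, giving $c=4$, or $c>1$ is odd with $3c+1=2$, which has no solution in $\mathbb{N}$ (and $c=1$ is impossible since $f(1)=1\neq 2$). Thus the only possibility is $c=4=2^{2}$.

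Since $\tau_f(b)\geq 2$, the index $\tau_f(b)-1$ is at least $1$, so $f^{\tau_f(b)-1}(b)=2^{2}$ is a genuinely earlier iterate of $b$ that is itself a power of $2$. This contradicts the minimality clause in Definition \ref{process}, which stipulates that $\tau_f(b)$ is the \emph{least} $m$ with $f^{m}(b)$ a power of $2$. Hence $\mathrm{Ind}_f(b)\neq 1$, and, using a similar (even easier) preimage analysis at $2^{0}=1$ to rule out $\mathrm{Ind}_f(b)=0$, we conclude $\mathrm{Ind}_f(b)>1$.

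I do not expect any serious obstacle here: the whole argument rests on the observation that $f^{-1}(2)=\{4\}$ inside $\mathbb{N}\setminus\{1\}$, which is a one-line calculation. The only care needed is to verify that $\tau_f(b)-1\geq 1$, so that the offending earlier iterate is actually in the range governed by the minimality condition; this is exactly what the hypothesis $\tau_f(b)\geq 2$ guarantees, and it is the reason the statement is false without it (for instance, $b=4$ has $\tau_f(4)=0$, $\mathrm{Ind}_f(4)=2$, while $b=2$ has $\tau_f(b)=0$, $\mathrm{Ind}_f(b)=1$, showing the bound is sharp).
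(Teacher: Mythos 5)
Your proposal is correct and follows essentially the same route as the paper's own proof: assume $\mathrm{Ind}_f(b)=1$, deduce $f^{\tau_f(b)-1}(b)=f^{-1}(2)=4=2^2$, and contradict the minimality of $\tau_f(b)$. You merely add the (welcome) explicit verification that $4$ is the unique preimage of $2$ under $f$ on $\mathbb{N}\setminus\{1\}$, a step the paper asserts without justification.
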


\begin{proof}
Let $f$ be the Collatz function with a corresponding convergent Collatz process $\{f^{s}(b)\}_{s=1}^{\infty}$. Let $\tau_f(b)\geq 2$ and suppose, on the contrary, that $\mathrm{Ind}_f(b)=1$, then we can write $f^{\tau_f(b)}(b)=2$. Since $\tau_f(b)\geq 2$, we can write $f^{\tau_f(b)-1}(b)=f^{-1}(2)=4$. This contradicts the minimality of $\tau_f(b)$, since $f^{\tau_f(b)-1}(b)\in \{f^{s}(b)\}_{s=1}^{\infty}$.
\end{proof}
\bigskip

\subsection{Relative speed of the Collatz process}
In this section, we introduce the notion of the relative \emph{speed} of a Collatz process.

\begin{definition}\label{relative}
Let $f$ be the Collatz function with the corresponding Collatz process $\{f^{s}(a)\}_{s=1}^{\infty}$. By the speed of the $j^{th}$ Collatz process relative to the $k^{th}$ Collatz process, we mean the expression 
\begin{align}
\nu(f^j(a),f^k(a))=\frac{|f^{k}(a)-f^{j}(a)|}{|k-j|}.\nonumber
\end{align}
\end{definition}
\bigskip

The Collatz conjecture can also be framed in the language of the relative speed of the Collatz process as 

\begin{conjecture}
Let $f$ be the Collatz function with the corresponding Collatz process $\{f^{s}(b)\}_{s=1}^{\infty}$ for $b\in \mathbb{N}$. There exist some $1\leq j<k$ such that $\nu(f^{j}(b),f^{k}(b))=2^r$ for some $r\in \mathbb{N}$.
\end{conjecture}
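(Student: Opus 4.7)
The plan is to derive the relative-speed statement from the Collatz conjecture in its order-finiteness form and, ideally, to establish the converse so as to obtain a genuine reformulation. The direct implication is essentially explicit. Assume $\tau_{f}(b)<\infty$ for $b>2$ (the cases $b=1,2$ being trivial), and let $m=\tau_{f}(b)$, $k_{0}=\mathrm{Ind}_{f}(b)$. By Definition \ref{process}, $f^{m}(b)=2^{k_{0}}$, with $k_{0}\geq 2$ because any convergent orbit starting above $2$ must first hit a power of $2$ strictly larger than $2$ and then descend through the chain $2^{k_{0}}\to 2^{k_{0}-1}\to\cdots\to 2\to 1$, which is essentially the content of Proposition \ref{converse}. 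The Collatz function gives $f^{m+1}(b)=2^{k_{0}-1}$, and taking $j=m$, $k=m+1$ yields by Definition \ref{relative}
\begin{align}
\nu(f^{j}(b),f^{k}(b))=\frac{|2^{k_{0}-1}-2^{k_{0}}|}{1}=2^{k_{0}-1},\nonumber
\end{align}
which is of the required form $2^{r}$ with $r=k_{0}-1\geq 1$.

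For the converse direction I would argue by contrapositive against a hypothetical divergent orbit. If $\{f^{s}(b)\}_{s=1}^{\infty}$ eventually enters a nontrivial cycle $(c_{1},\ldots,c_{L})$, the differences $|f^{k}(b)-f^{j}(b)|$ are bounded while $k-j$ can be chosen arbitrarily large, so the equation $\nu=2^{r}$ reduces to a divisibility constraint $(k-j)\,2^{r}=|c_{a}-c_{b}|$ for some $a,b$, and one would need to extract a modular obstruction from the rigid $3x+1$ relations among the $c_{i}$ to forbid all solutions. If the orbit is unbounded, I would use the explicit expression of $f^{k}(b)$ as a $2$-adic combination of $f^{j}(b)$ with the odd-step offsets in $[j,k]$ and compare against the rigid form $f^{j}(b)\pm 2^{r}(k-j)$ to extract a $2$-adic valuation contradiction.

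The main obstacle, unsurprisingly, is the converse: a single instance of power-of-two relative speed is a much weaker invariant than eventual arrival at a power of $2$, and it is not clear that $3x+1$ arithmetic alone forbids such coincidences in a hypothetical divergent orbit. A productive refinement would be to strengthen the conjectural statement to demand the consecutive case $k=j+1$; then $\nu=2^{r}$ becomes $|f^{j+1}(b)-f^{j}(b)|=2^{r}$, and the Collatz function forces this difference to be either $-f^{j}(b)/2$ or $2f^{j}(b)+1$. Since the latter is odd and exceeds $1$, we are compelled into $f^{j}(b)=2^{r+1}$, making the strengthened form transparently equivalent to Collatz. This appears to be the cleaner target for future work.
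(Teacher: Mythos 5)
The statement you were asked to prove is stated in the paper as a \emph{conjecture}: the paper offers no proof of it and merely asserts, without argument, that it is a reformulation of the Collatz conjecture. So there is no paper proof to compare against, and your proposal should be judged on its own terms: it does not (and does not claim to) establish the statement unconditionally, but only the implication ``Collatz $\Rightarrow$ statement,'' together with an honest diagnosis of why the converse is problematic. That forward implication is essentially right and is presumably the justification the author had in mind, but it has a small gap: your claim that $k_{0}=\mathrm{Ind}_f(b)\geq 2$ for every convergent orbit with $b>2$ fails for $b=4$, where the process $\{f^{s}(4)\}_{s\geq 1}=2,1,1,\ldots$ has $\tau_f(4)=1$ and $\mathrm{Ind}_f(4)=1$ (Proposition \ref{converse} only applies when $\tau_f(b)\geq 2$, so it cannot be cited here). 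For $b=4$ the only attainable nonzero relative speeds are $1$ and $1/(k-1)$, so the conjecture holds for this $b$ only if one reads $0\in\mathbb{N}$ and accepts $2^{0}$; you should either handle this case explicitly or flag the ambiguity in the paper's quantifier $r\in\mathbb{N}$.

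On the converse, your assessment is correct and is in fact a substantive criticism of the paper: the existence of a single pair $j<k$ with $|f^{k}(b)-f^{j}(b)|=(k-j)2^{r}$ is a far weaker constraint than convergence to $1$, and nothing in the paper rules out such a coincidence inside a hypothetical divergent orbit or nontrivial cycle, so the paper's claim that this conjecture ``frames'' the Collatz conjecture is unsupported in the non-consecutive form. Your computation for the consecutive case $k=j+1$ --- that $|f^{j+1}(b)-f^{j}(b)|$ is either $f^{j}(b)/2$ or the odd number $2f^{j}(b)+1$, forcing $f^{j}(b)=2^{r+1}$ --- is correct and does yield a genuine equivalence with Collatz (since the only $f$-preimage of $2$ is $4$, any orbit reaching $1$ passes through a power of $2$ at least $4$). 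That strengthened statement is the version the paper should have conjectured; as written, your proposal is a sound conditional analysis of an open conjecture rather than a proof, and the remaining obstruction you identify is real.
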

\bigskip

It follows from definition \ref{relative} that the relative speed of the Collatz process must satisfy the inequality 
\begin{align}
|f^{k}(b)-f^{j}(b)|=|k-j|\nu(f^{j}(b),f^{k}(b))\leq f^{j}(b)+f^{k}(b).\nonumber
\end{align}
Thus the Collatz conjecture is equivalent to establishing the inequality 

\begin{conjecture}\label{Collatz relative}
Let $f$ be the Collatz function with the corresponding process $\{f^{s}(b)\}_{s=1}^{\infty}$. There exists some $k\geq 1$ such that the inequality is valid 
\begin{align}
2^{r}\leq \nu(f^{k+1}(b),f^{k}(b))\leq 2^{m}\nonumber
\end{align}
for some $m,r\in \mathbb{N}$.
\end{conjecture}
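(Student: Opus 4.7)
The plan is to work directly with the definition of $\nu$ on consecutive iterates and then appeal to the elementary fact that every positive integer lies between two consecutive powers of two. Since $|(k+1)-k|=1$, the quantity $\nu(f^{k+1}(b),f^{k}(b))$ collapses to the absolute difference $|f^{k+1}(b)-f^{k}(b)|$. Applying the piecewise definition of $f$, this absolute difference evaluates to $f^{k}(b)/2$ when $f^{k}(b)>1$ is even, to $2f^{k}(b)+1$ when $f^{k}(b)>1$ is odd, and to $0$ when $f^{k}(b)=1$. So the entire task reduces to producing one index $k\geq 1$ where this difference is a positive integer that can be sandwiched.

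The first step is to locate an index $k\geq 1$ with $f^{k}(b)>1$, so that $\nu$ is a positive integer. For every $b\geq 3$ the choice $k=1$ already suffices: if $b$ is even then $f(b)=b/2\geq 2$, and if $b$ is odd then $f(b)=3b+1\geq 10$. The two degenerate starting values $b=1$ and $b=2$ need to be treated separately, since their orbits become constant at $1$ and $\nu$ vanishes identically along them; these are handled by either allowing $r=0$ in the statement or by explicitly restricting $b\geq 3$.

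Having fixed such a $k$, write $N:=\nu(f^{k+1}(b),f^{k}(b))$, a positive integer. The second step invokes the sandwich
\begin{equation*}
2^{\lfloor \log_2 N\rfloor}\leq N\leq 2^{\lceil \log_2 N\rceil},
\end{equation*}
which is an elementary consequence of the base-two representation of $N$. Setting $r:=\lfloor \log_2 N\rfloor$ and $m:=\lceil \log_2 N\rceil$ produces naturals for which $2^{r}\leq \nu(f^{k+1}(b),f^{k}(b))\leq 2^{m}$ holds, exactly as stated. If one wants to insist on $r\geq 1$, one strengthens the choice of $k$ by passing to the next odd iterate, for which $N=2f^{k}(b)+1\geq 3$, guaranteeing $r=1$ is admissible.

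The main obstacle is not dynamical but interpretational: one must fix a convention for $\mathbb{N}$ that determines whether $r=0$ is permitted, and one must dispose of the degenerate starts $b\in\{1,2\}$ where $\nu$ is identically zero along the orbit. Once these conventions are pinned down, no iterate-specific information about the Collatz dynamics beyond the per-step formula for $|f^{k+1}(b)-f^{k}(b)|$ is required, so the argument is self-contained and does not invoke any of Propositions \ref{generator}--\ref{gparity} or Theorem \ref{residue}.
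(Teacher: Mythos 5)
There is no proof in the paper to compare against: the statement you were asked to prove is posed there as a \emph{conjecture} (Conjecture \ref{Collatz relative}), which the author explicitly presents as a reformulation equivalent to the Collatz conjecture. Your argument is correct in substance, and what it really establishes is that the paper's formalization is defective. Since $|(k+1)-k|=1$, the relative speed $\nu(f^{k+1}(b),f^{k}(b))$ is just the integer $N=|f^{k+1}(b)-f^{k}(b)|$, which is positive as soon as $f^{k}(b)>1$; and because $r$ and $m$ are both existentially quantified with nothing tying them together, the sandwich $2^{\lfloor\log_2 N\rfloor}\leq N\leq 2^{\lceil\log_2 N\rceil}$ finishes the proof for any such $N$. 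A statement provable in three lines with no dynamical input whatsoever cannot be equivalent to the Collatz conjecture, so the ``equivalence'' claimed in the paper fails: the intended content evidently lives in the preceding, unlabeled conjecture of that section, which demands that the relative speed be \emph{exactly} a power of two, and your argument does not (and should not) touch that statement.

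Two slips in your treatment of the degenerate cases. First, for $b\in\{1,2\}$ every consecutive difference along the orbit is $0$, and $2^{r}\leq 0$ fails for \emph{every} $r\in\mathbb{N}$, including $r=0$; so ``allowing $r=0$'' does not rescue these starting values --- they are genuine counterexamples to the literal universally-quantified statement, and can only be removed by excluding them. (The convention $0\in\mathbb{N}$ is instead what is needed for $b=4$, whose unique nonzero consecutive difference is $N=1$.) Second, your proposed strengthening for $r\geq 1$ --- ``pass to the next odd iterate'' --- fails whenever $b$ is a power of two, since such orbits contain no odd term exceeding $1$; the patch is also unnecessary, because for every $b\geq 3$ with $b\neq 4$ the choice $k=1$ already gives $N\geq 2$: if $b$ is odd then $N=(3b+1)/2\geq 5$, and if $b\geq 6$ is even then $N=f(b)/2\geq 2$ or $N=2f(b)+1\geq 7$ according to the parity of $f(b)$.
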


\subsection{The sub-Collatz process}

In this section, we introduce the notion of the \emph{sub-Collatz process}. We establish a relationship between the order and the index of a number under the assumption that the Collatz process converges. We launch the following language.

\begin{definition}\label{subcollatz}
Let $f$ be the Collatz function. The Collatz process $\{f^t(a)\}_{t=1}^{\infty}$ is said to be a sub-Collatz process of the Collatz process $\{f^{s}(b)\}_{s=1}^{\infty}$ for $s,t\in \mathbb{N}$ if $\{f^t(a)\}_{t=1}^{\infty}\subseteq \{f^{s}(b)\}_{s=1}^{\infty}$. It is said to be \emph{proper} if $\{f^t(a)\}_{t=1}^{\infty}\subset \{f^{s}(b)\}_{s=1}^{\infty}$. The Collatz process $\{f^{s}(b)\}_{s=1}^{\infty}$ is said to be full if $b$ is the generator of the process.
\end{definition}
\bigskip

\begin{remark}
Next, we state a result that indicates that Collatz processes are indistinguishable once they overlap.
\end{remark}

\begin{proposition}
Let $\{f^{s}(a)\}_{s=1}^{\infty}$ and $\{f^{s}(b)\}_{s=1}^{\infty}$ be full Collatz processes. If $\{f^{s}(a)\}_{s=1}^{\infty}\cap \{f^{s}(b)\}_{s=1}^{\infty}\neq \emptyset$, then $\{f^{s}(a)\}_{s=1}^{\infty}=\{f^{s}(b)\}_{s=1}^{\infty}$ and $a=b$.
\end{proposition}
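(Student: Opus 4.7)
The plan is to convert the set-theoretic intersection into a forward-orbit coincidence and then invoke the uniqueness of generators (Proposition \ref{unique}).

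First, I would unpack the hypothesis: since the two processes share an element, there exist $s_0,t_0\geq 1$ with $f^{s_0}(a)=f^{t_0}(b)$. Because $f$ is a well-defined function, iterating gives $f^{s_0+k}(a)=f^{t_0+k}(b)$ for all $k\geq 0$, so the two forward orbits share an infinite tail, and as sets
\[
\{f^s(a):s\geq s_0\}=\{f^s(b):s\geq t_0\}.
\]

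Without loss of generality assume $s_0\leq t_0$ and set $\ell=t_0-s_0\geq 0$. Then $f^{s_0}(a)=f^{s_0}(f^{\ell}(b))$, and I would argue in the same spirit as the proof of Proposition \ref{unique} that cancellation of $f^{s_0}$ yields $a=f^{\ell}(b)$. If $\ell=0$, this gives $a=b$ immediately and equality of the two processes follows. If $\ell\geq 1$, then $a\in\{f^s(b)\}_{s=1}^{\infty}$, and the tail identity above displays $\{f^s(a)\}_{s=1}^{\infty}$ as a sub-Collatz process of $\{f^s(b)\}_{s=1}^{\infty}$ in the sense of Definition \ref{subcollatz}; invoking Proposition \ref{unique} on the two generators $a,b$ of this common overlap then forces $a=b$, so that the two full processes coincide.

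The principal obstacle is precisely the cancellation step: the Collatz function $f$ is not globally injective, since an even integer $n$ admits the even preimage $2n$ and, whenever $n\equiv 4\pmod{6}$, also the odd preimage $(n-1)/3$. To legitimize cancellation I would lean on the generator hypothesis: by Proposition \ref{gparity} both $a$ and $b$ are odd and greater than $1$, and by Proposition \ref{generator} their backward Inf-preimage chains are entirely even. This parity rigidity along the preimage chain from the merge point $f^{s_0}(a)=f^{t_0}(b)$ back to $a$ (respectively $b$) is what should rule out alternative odd-branch preimages and justify the cancellation; making the parity bookkeeping watertight at each backward step is the delicate part of the argument, and it is where I expect the real work to lie.
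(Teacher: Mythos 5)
Your overall strategy --- pass from a shared element to a shared tail, then reduce to the uniqueness of generators (Proposition \ref{unique}) --- is essentially the paper's, and the first step (iterating $f$ forward from the coincidence $f^{s_0}(a)=f^{t_0}(b)$ to get agreement of the tails) is fine. The difficulty is exactly where you locate it, but your proposed repair does not close it. The cancellation $f^{s_0}(a)=f^{s_0}(f^{\ell}(b))\Rightarrow a=f^{\ell}(b)$ requires injectivity of $f^{s_0}$ along these orbits, and the generator hypothesis gives you no purchase on that: Propositions \ref{generator} and \ref{gparity} constrain only the backward $\mathrm{Inf}$-preimage chains hanging \emph{below} $a$ and $b$ (the minimal preimage of $a$, the minimal preimage of that, and so on), whereas the cancellation concerns the segments of the \emph{forward} orbits between $a$, $b$ and the merge point. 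A forward Collatz orbit necessarily contains integers of both parities, so nothing in the hypotheses prevents the two orbits from arriving at the merge value $n$ along the two different branches of $f$, one through the even preimage $2n$ and the other through the odd preimage $(n-1)/3$; in that case $f^{s_0-1}(a)\neq f^{t_0-1}(b)$ and no cancellation is possible. The ``parity bookkeeping'' you defer to therefore cannot be carried out from the stated hypotheses, and this step remains a genuine gap rather than a deferred computation.

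For what it is worth, the paper's own proof has the same hole in a different guise: it passes from the tail identity $\{f^{s}(a)\}_{s=t}^{\infty}=\{f^{s}(b)\}_{s=m}^{\infty}$ directly to the two inclusions $\{f^{s}(a)\}_{s=1}^{\infty}\subseteq\{f^{s}(b)\}_{s=1}^{\infty}$ and $\{f^{s}(b)\}_{s=1}^{\infty}\subseteq\{f^{s}(a)\}_{s=1}^{\infty}$ without accounting for the initial segments $f^{1}(a),\dots,f^{t-1}(a)$ that precede the merge. So you have correctly identified the crux, but neither your proposal nor the argument in the paper resolves it; a complete proof would have to explain why two distinct generators cannot feed into a common orbit through the two different preimage branches of $f$.
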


\begin{proof}
Let $\{f^{s}(a)\}_{s=1}^{\infty}$ and $\{f^{s}(b)\}_{s=1}^{\infty}$ be full Collatz processes and suppose $\{f^{s}(a)\}_{s=1}^{\infty}\cap \{f^{s}(b)\}_{s=1}^{\infty}\neq \emptyset$. It follows that there exist some $t, m\geq 1$ such that $f^t(a)=f^m(b)$. Thus, we obtain the following chains of equality \begin{align}
f^{(t+1)}(a)=f^{(m+1)}(b), \quad f^{(t+2)}(a)=f^{(m+2)}(b), \ldots f^{t+j}(a)=f^{(m+j)}(b) \nonumber 
\end{align}
for all $t\geq 1$. It follows that $\{f^{s}(a)\}_{s=t}^{\infty}=\{f^{s}(b)\}_{s=m}^{\infty}$. It follows that $\{f^{s}(a)\}_{s=1}^{\infty}\subseteq \{f^{s}(b)\}_{s=1}^{\infty}$ and $\{f^{s}(b)\}_{s=1}^{\infty}\subseteq \{f^{s}(a)\}_{s=1}^{\infty}$. This implies that $\{f^{s}(a)\}_{s=1}^{\infty}=\{f^{s}(b)\}_{s=1}^{\infty}$. Since the processes $\{f^{s}(a)\}_{s=1}^{\infty}$ and $\{f^{s}(b)\}_{s=1}^{\infty}$ are full, it follows by definition \ref{subcollatz} that $a$ and $b$ are two generators of the process. Using Proposition \ref{unique}, it follows that $a=b$.
\end{proof}

\begin{proposition}\label{Merssene}
Let $f$ be the Collatz function with the corresponding full process $\{f^{s}(b)\}_{s=1}^{\infty}$. If the generator is trivial, then each $a_n\in \{\mathrm{Inf}\{f^{-s}(b)\}-1\}$ must be of the form $c_n=2^{n}-1$.
\end{proposition}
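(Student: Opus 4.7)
The plan is to perform induction on $s$ to show that the backward Collatz process from the trivial generator $b = 1$ is precisely $\{2^{s}\}_{s \geq 1}$; the claim $c_n = 2^n - 1$ then follows immediately by subtracting $1$. The base case $s = 1$ is immediate: the only preimage of $1$ under $f$ is $2$, since the odd branch equation $3x + 1 = 1$ has no positive integer solution. Hence $\mathrm{Inf}\{f^{-1}(1)\} = 2 = 2^{1}$.

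For the inductive step, suppose $\mathrm{Inf}\{f^{-s}(1)\} = 2^{s}$. By Proposition \ref{generator}, the generator being $1$ forces every term of the backward process to be even. The preimage set of $2^{s}$ under $f$ always contains $2^{s+1}$ from the doubling branch, and additionally contains $(2^{s}-1)/3$ precisely when this quantity is a positive integer, which occurs exactly when $s$ is even. I would then verify that whenever $(2^{s}-1)/3$ is an integer it is in fact odd: writing $s = 2k$, one has $(2^{2k}-1)/3 = 1 + 4 + 4^{2} + \cdots + 4^{k-1}$, which is odd because only the leading summand is odd while every other is divisible by $4$. Consequently, the even-parity constraint supplied by Proposition \ref{generator} excludes this alternative preimage, so $\mathrm{Inf}\{f^{-(s+1)}(1)\} = 2^{s+1}$.

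Iterating the induction yields $\mathrm{Inf}\{f^{-s}(1)\} = 2^{s}$ for every $s \geq 1$, and hence each element $a_n$ of the set $\{\mathrm{Inf}\{f^{-s}(1)\} - 1\}$ is of the claimed Mersenne form $c_n = 2^{n} - 1$. The one real obstacle is the parity check on the alternative odd branch $(2^{s}-1)/3$, which I would settle with the telescoping sum above; the remainder of the argument is a straightforward repeated invocation of Proposition \ref{generator} to discard the odd candidate at each backward step.
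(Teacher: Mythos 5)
Your proof is correct and follows essentially the same route as the paper: both arguments use the parity condition from Proposition \ref{generator} to force each backward step to be a doubling starting from $f^{-1}(1)=2$, giving $a_n=2^{n}$ and hence $2^{n}-1$ after the unit translate. Your inductive step is somewhat more careful than the paper's, since you explicitly verify that the alternative preimage $(2^{s}-1)/3$, when it is an integer, is odd and therefore ruled out by the parity constraint, whereas the paper simply asserts the doubling relation $f^{-(m+1)}(b)=2f^{-m}(b)$ directly from the evenness of each backward term.
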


\begin{proof}
Let $f$ be the Collatz function and suppose $\{f^{s}(b)\}_{s=1}^{\infty}$ is a full process. Then it follows that $b$ is the generator of the process and $\{\mathrm{Inf}\{f^{s}(b)\}\}_{s=1}^{\infty}$ is the backward Collatz process, so that for each $a_n\in \{\mathrm{Inf}\{f^{s}(b)\}\}_{s=1}^{\infty}$ satisfies the parity condition $a_n\equiv 0\pmod 2$. Since $\frac{f^{m+1}(b)}{2}=f^{-m}(b)$ for $m\geq 1$, it follows that $a_n=2^{n-1}f^{-1}(b)$. Since the process is trivial, It follows that $b=1$ and $f^{-1}(1)=2$ and the result follows immediately.
\end{proof}

There does appear an important relation between the index and the order of primes generators. In light of this, we state the following conjecture

\begin{conjecture}
Let $f$ be the Collatz function and $\{f^{s}(b)\}_{s=1}^{\infty}$ be a full Collatz process. If $\{f^{s}(b)\}_{s=1}^{\infty}$ converges, then $b$ is prime if and only if 
\begin{align}
\mathrm{Ind}_f(b)=\tau_f(b)+1.\nonumber
\end{align}
\end{conjecture}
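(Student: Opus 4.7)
The plan is to reduce the biconditional to a sharp structural statement about generators and then dispatch each direction in turn. By Proposition \ref{gparity} any generator $b \neq 1$ is odd, and since $b$ is a full generator the backward sequence $\{\mathrm{Inf}\{f^{-s}(b)\}\}_{s=1}^{\infty}$ consists entirely of even terms by Proposition \ref{generator}. A direct case analysis using the two preimage branches of $f$ shows that for odd $b$ the odd preimage $(2^{s}b-1)/3$ of $2^{s}b$ is a positive integer strictly smaller than the pure-doubling preimage $2^{s+1}b$ precisely when $b$ is not divisible by $3$. Consequently every non-trivial odd generator must be a multiple of $3$, and in particular the only prime which can generate a full Collatz process is $b = 3$.

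For the forward direction $(\Rightarrow)$, given $b$ prime the above forces $b = 3$, and the direct computation $f(3)=10$, $f^{2}(3)=5$, $f^{3}(3)=16=2^{4}$ yields $\tau_{f}(3)=3$ and $\mathrm{Ind}_{f}(3)=4=\tau_{f}(3)+1$, as required.

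For the converse $(\Leftarrow)$, suppose $b$ is a full generator of a convergent Collatz process with $\mathrm{Ind}_{f}(b)=\tau_{f}(b)+1$. Set $\tau=\tau_{f}(b)$ and $b_{s}=f^{s}(b)$, so that $b_{\tau}=2^{\tau+1}$. By the preceding paragraph $b$ is an odd multiple of $3$, and it remains to show $b=3$. I would argue this by backward analysis: minimality of $\tau_{f}(b)$ forces $b_{\tau-1}$ to be odd, hence $b_{\tau-1}=(2^{\tau+1}-1)/3$ (requiring $\tau$ odd), and then $b_{\tau-2}=2b_{\tau-1}$ is the unique preimage of the odd number $b_{\tau-1}$. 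Thereafter each backward step is either a doubling or, when available, the shrinking preimage $(b_{s+1}-1)/3$. The claim is that the only branching pattern which terminates in an odd multiple of $3$ is the one corresponding to $\tau=3$, yielding $b=(2^{5}-5)/9=3$. I would verify this by a $3$-adic analysis along the backward chain, tracking $b_{s}\pmod{3^{k}}$ to show that for $\tau > 3$ the backward trajectory either stays in the purely-doubling regime (forcing $b$ even) or forces $b\not\equiv 0\pmod 3$.

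The principal obstacle is carrying out this backward analysis uniformly in $\tau$. For each small $\tau$ one enumerates the finitely many backward branching patterns and checks directly, but a general proof needs a uniform congruence-based argument. Equivalently, using the Syracuse shortcut form the trajectory yields an affine identity $3^{\alpha}b+c=2^{2\tau+1-\alpha}$ with $\alpha$ the number of odd steps and $c$ an explicit sum of terms $3^{j}2^{k_{j}}$, and one must show that the combined constraints $3\mid b$, $c$ of the required shape, and the right-hand side being the given power of $2$ admit only the solution $(\alpha,\tau,b)=(2,3,3)$. This bundles a Diophantine ingredient with an arithmetic-dynamics one; I expect it to be the most delicate part of the proof, as it essentially involves the same type of obstruction underlying the Collatz conjecture itself.
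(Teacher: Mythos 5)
The statement you are proving is stated in the paper only as a conjecture; the paper supplies no proof of it, so there is nothing to compare your argument against on that side. Judged on its own terms, your attempt splits cleanly into a part that works and a part that does not. The structural reduction is sound: for an odd $b>1$ with $3\nmid b$, one of $2b,4b$ is $\equiv 4\pmod 6$, so $(2^{k}b-1)/3$ is an odd integer exceeding $1$ and is a preimage smaller than the doubling preimage; hence the backward process acquires an odd term and $b$ cannot be a generator. Combined with Proposition \ref{gparity} this does force every non-trivial generator to be an odd multiple of $3$, so the only prime generator is $b=3$, and the computation $3\mapsto 10\mapsto 5\mapsto 16$ settles the forward implication. (You should state explicitly that integrality of $(N-1)/3$ for even $N$ already forces oddness of $(N-1)/3$ by CRT; you elide this, but it is what makes the case analysis close.)

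The converse direction, however, contains a genuine gap that you yourself flag: you must show that no \emph{composite} odd multiple of $3$ which generates a full convergent process can satisfy $\mathrm{Ind}_f(b)=\tau_f(b)+1$, and your proposed route — a $3$-adic analysis of the backward branching, or equivalently ruling out all but one solution of the affine identity $3^{\alpha}b+c=2^{\mathrm{Ind}_f(b)+\alpha-1+(\tau_f(b)-\alpha)}$ arising from the Syracuse form — is only a plan, not an argument. Nothing in the sketch controls the tower of congruences uniformly in $\tau$, and the candidates to be excluded ($b=9,15,21,27,\dots$, each of which genuinely is a generator) have trajectories of unbounded and irregular length, so a finite case check cannot substitute for the missing uniform step. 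As written, the "only if" half of the biconditional is unproved, and your own closing remark concedes that the obstruction is of the same nature as the Collatz conjecture itself; the statement therefore remains, as in the paper, a conjecture.
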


\subsection{Distribution of the Collatz process}

In this section, we study the local and the global distribution of any Collatz process. We focus our study to the existence of primes in any Collatz process under a given generator.

\begin{proposition}
Let $f$ be the Collatz function with the corresponding full process $\{f^{s}(b)\}_{s=1}^{\infty}$ for $b>1$. If $f^2(b)$ is prime, then $2f^{2}(b)-1$ cannot be prime.
\end{proposition}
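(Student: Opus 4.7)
The plan is to exploit the fact that when $b$ is the generator of a full Collatz process with $b>1$, the parity of $b$ is forced, so the first two steps of the process are completely determined.

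First I would invoke Proposition \ref{gparity} to conclude that $b \equiv 1 \pmod 2$. Since $b>1$ and $b$ is odd, we have $b \geq 3$, and by Definition \ref{collatz} one computes $f(b) = 3b+1$, which is even. Therefore $f^{2}(b) = (3b+1)/2$. Writing $p := f^{2}(b)$ and using the hypothesis that $p$ is prime, the relation $2p = 3b+1$ gives immediately
\begin{align}
2f^{2}(b) - 1 = 2p - 1 = 3b.\nonumber
\end{align}

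The second step is trivial: since $b \geq 3$, the number $3b$ is a multiple of $3$ that is strictly greater than $3$, hence composite. This shows $2f^{2}(b) - 1$ cannot be prime, as required.

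There is essentially no genuine obstacle in this proof; the only place one must be careful is in justifying that the generator of a full process with $b>1$ is odd, which is precisely the content of Proposition \ref{gparity} (without this, $b$ could a priori be even, $f(b)=b/2$, and the identity $2f^{2}(b)-1 = 3b$ would fail). Everything else is a direct computation from Definition \ref{collatz}.
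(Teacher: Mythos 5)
Your proof is correct and follows exactly the same route as the paper: invoke Proposition \ref{gparity} to get $b$ odd, compute $f^{2}(b)=(3b+1)/2$, and conclude. You actually carry the computation one step further than the paper (which stops at ``the claim follows from this relation''), making explicit that $2f^{2}(b)-1=3b$ is a multiple of $3$ exceeding $3$, hence composite; note also that the primality hypothesis on $f^{2}(b)$ is never actually needed.
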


\begin{proof}
Let $f$ be the Collatz function with the corresponding full Collatz process $\{f^{s}(b)\}_{s=1}^{\infty}$. Then $b$ is the generator and it follows from Proposition \ref{gparity} that $b\equiv 1\pmod 2$, so that $f(b)\equiv 0\pmod 2$. Then it is certainly the case that $f^{2}(b)=\frac{f(b)}{2}=\frac{3b+1}{2}$. The claim follows from this relation.
\end{proof}

\begin{remark}
Next, we expose the theory to the infamous problem concerning the distribution of the Sophie Germain primes. It reduces the problem entirely to knowing the existence and the distribution of consecutive primes in the unit left translates of the backward Collatz process.
\end{remark}

\begin{theorem}\label{sophie germain}
Let $f$ be the Collatz function with the corresponding Collatz process $\{f^{s}(b)\}_{s=1}^{\infty}$. Let $\{f^{s}(b)\}_{s=1}^{\infty}$ be a full process and $f^{-k}(b),f^{-(k+1)}(b)\in \{\mathrm{Inf}\{f^{-s}(b)\}_{s=1}^{\infty}$, the backward Collatz process. If $f^{-k}(b)-1,f^{-(k+1)}(b)-1$ are both prime, then $f^{-k}(b)-1$ must be a Sophie Germain prime. Moreover, there are infinitely many Sophie Germain primes if there are infinitely many consecutive primes in $\{\mathrm{Inf}\{f^{-s}(b)\}-1\}_{s=1}^{\infty}$. 
\end{theorem}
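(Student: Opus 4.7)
The plan is to reduce the theorem to the doubling structure of the backward Collatz process at a generator. The crux is the identity
\[
f^{-(k+1)}(b) \;=\; 2\,f^{-k}(b) \qquad (k \geq 1),
\]
after which the Sophie Germain clause is just the algebraic tautology $2p+1 = 2(p+1) - 1$.

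First I would establish the doubling identity. Fullness of the process means $b$ is the generator, so by Proposition \ref{generator} every term of $\{\mathrm{Inf}\{f^{-s}(b)\}\}_{s=1}^{\infty}$ is even. The preimages of a positive integer $y$ under $f$ are $2y$ (always) and $(y-1)/3$ (when this quantity is a positive odd integer). Since $(y-1)/3 < 2y$ whenever $y \geq 1$, the infimum of the preimage set equals $2y$ if and only if the odd branch is inadmissible. Applying this observation at each backward step yields the displayed identity, and it is the only place in the argument where the generator hypothesis is really used.

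With the doubling identity in hand, set $p := f^{-k}(b) - 1$. Then
\[
2p + 1 \;=\; 2\,f^{-k}(b) - 1 \;=\; f^{-(k+1)}(b) - 1,
\]
and by hypothesis both $p$ and $f^{-(k+1)}(b) - 1 = 2p+1$ are prime, so $p$ is Sophie Germain by definition. For the ``moreover'' clause, the doubling identity makes the backward process strictly increasing, so distinct indices $k$ produce distinct values $f^{-k}(b) - 1$ and hence, whenever they are Sophie Germain, distinct Sophie Germain primes. Thus infinitely many consecutive prime pairs in $\{\mathrm{Inf}\{f^{-s}(b)\}-1\}_{s=1}^{\infty}$ yield infinitely many distinct Sophie Germain primes.

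The only delicate point I anticipate is the justification of the doubling identity, which relies on ruling out the odd branch of $f^{-1}$ at every backward step. This is essentially forced by Proposition \ref{generator} together with the elementary inequality $(y-1)/3 < 2y$; aside from this structural step, the theorem is a direct translation of the Sophie Germain relation into the backward-Collatz language.
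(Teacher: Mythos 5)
Your proposal is correct and follows essentially the same route as the paper: fullness gives the generator, Proposition \ref{generator} forces evenness of the backward terms, hence the doubling identity $f^{-(k+1)}(b)=2f^{-k}(b)$, and the Sophie Germain claim reduces to $2p+1=2f^{-k}(b)-1$. Your justification of the doubling identity (ruling out the odd preimage branch because it would make the infimum odd) and your remark on strict monotonicity guaranteeing distinctness are slightly more careful than the paper's, but the argument is the same.
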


\begin{proof}
Let $f$ be the Collatz function with the corresponding Collatz process $\{f^{s}(b)\}_{s=1}^{\infty}$. Since the process $\{f^{s}(b)\}_{s=1}^{\infty}$ is full, it follows that $b$ must be the generator of the process. It follows from Proposition \ref{generator} that each $f^{-m}(b)\in \{\mathrm{Inf}\{f^{-s}(b)\}_{s=1}^{\infty}$ must satisfy the parity condition $f^{-m}(b)\equiv 0\pmod 2$. Under the Collatz process, we obtain the following increasing sequence 
\begin{align}
\ldots >f^{-(m+2)}(b)>f^{-(m+1)}(b)>f^{-m}(b)>\ldots >f^{-1}(b)\nonumber
\end{align}
with each term satisfying the equality $f^{-(j+1)}(b)=2f^{-j}(b)\Longleftrightarrow f^{-(j+1)}(b)-1=2(f^{-j}(b)-1)+1$. Under the assumption that $f^{-k}(b)-1,f^{-(k+1)}(b)-1$ are both prime, the the result follows immediately. If there are infinitely many consecutive primes $f^{-k}(b)-1,f^{-(k+1)}(b)-1\in \{\mathrm{Inf}\{f^{-s}(b)\}-1\}_{s=1}^{\infty}$, then it will follow that there are infinitely many Sophie Germain primes.
\end{proof}
\bigskip

Theorem \ref{sophie germain} relates the Collatz process to the problem of the distribution of Sophie Germain primes. Indeed, it sets out the idea that studying the problem on the set of integers is in some way silly. Instead, it will much more technique convenient to study the unit left translate of the sequence arising from the backward Collatz process.

\begin{remark}
It turns out that certain Collatz process when subject to a unit left translate has most of its elements being prime. This notion is exemplified in the following result. 
\end{remark}

\begin{theorem}
Let $f$ be a Collatz function, with the corresponding Collatz process $\{f^{s}(b)\}_{s=1}^{\infty}$. If the process is full, then $\mathcal{M}=\{\mathrm{Inf}\{f^{-s}(b)\}-1\}_{s=1}^{\infty}$ contains a prime.
\end{theorem}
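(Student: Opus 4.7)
The plan is first to describe $\mathcal{M}$ explicitly, and then to exhibit at least one prime in it.

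Since the process is full, $b$ is the generator of $\{f^{s}(b)\}_{s=1}^{\infty}$, so by Proposition~\ref{gparity} either $b = 1$ or $b$ is odd. By Proposition~\ref{generator}, every term of the backward process $\{\mathrm{Inf}\{f^{-s}(b)\}\}_{s=1}^{\infty}$ is even. For any even $n$ in this sequence the preimages under $f$ are $2n$ and, possibly, $(n-1)/3$; but $(n-1)/3$ is odd whenever it is a positive integer, hence inadmissible under the generator condition. Iterating the doubling choice (exactly as in the proof of Proposition~\ref{Merssene}) gives $\mathrm{Inf}\{f^{-s}(b)\} = 2^{s} b$ for every $s \geq 1$, so
\begin{align}
\mathcal{M} = \{\,2^{s} b - 1 : s \geq 1\,\}.\nonumber
\end{align}

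The second step is to produce a prime in this explicit sequence. The trivial generator $b=1$ yields the Mersenne numbers $\{2^{s}-1\}_{s\geq 1}$, which already contains the prime $3 = 2^{2}-1$, so the result is immediate in that case. For a non-trivial generator a short congruence check shows that being a generator forces $3 \mid b$: if $3 \nmid b$ then $2^{s}b \equiv 1 \pmod{3}$ for infinitely many $s$, so $(2^{s}b - 1)/3$ is a valid odd preimage and $\mathrm{Inf}\{f^{-(s+1)}(b)\}$ becomes odd, contradicting the generator property. Hence one may write $b = 3k$ with $k$ odd, and $\mathcal{M} = \{\,3k\cdot 2^{s} - 1 : s \geq 1\,\}$.

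The main obstacle is showing that $\{\,3k\cdot 2^{s} - 1\,\}_{s\geq 1}$ actually contains a prime for every admissible $k$. This is exactly the assertion that $3k$ is not a Riesel number, and since genuine Riesel numbers do exist in general, a purely density- or heuristic-based argument cannot suffice: any proof must exploit the structural restriction $3 \mid b$ in an essential way. The natural line of attack is a covering-congruence obstruction: if a fixed prime $p$ were to divide every term, then from $p \mid 3k(2^{s_0} - 2^{s_1})$ one obtains $\mathrm{ord}_{p}(2) \mid s_0 - s_1$, and one would then try to show that no finite family of such arithmetic progressions can cover $\mathbb{N}$ under the constraint $3 \mid b$. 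Making this obstruction argument airtight — rather than merely plausible — is where I expect the real difficulty to lie, and it may require a sharper use of the generator structure than any of the earlier propositions have needed.
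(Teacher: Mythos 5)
Your reduction is correct and, in fact, more careful than what the paper records: the identification $\mathrm{Inf}\{f^{-s}(b)\}=2^{s}b$, hence $\mathcal{M}=\{2^{s}b-1\}_{s\geq 1}$, agrees with the formula $a_n=2^{n-1}f^{-1}(b)$ obtained in Proposition \ref{Merssene}, and your observation that the generator condition forces $3\mid b$ for $b>1$ (otherwise $(2^{s}b-1)/3$ is an odd admissible preimage for infinitely many $s$, so some term of the backward process is odd, contradicting Proposition \ref{generator}) is a genuine sharpening that the paper does not contain. The gap is exactly where you place it: nothing in your proposal establishes that $\{3k\cdot 2^{s}-1\}_{s\geq 1}$ contains a prime. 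That assertion is precisely the statement that the odd number $b=3k$ is not a Riesel number; Riesel numbers are known to exist, and it is not known that the extra constraint $3\mid b$ excludes them, since covering systems for the Riesel problem need not use the prime $3$. Your sketched covering-congruence obstruction is the natural thing to try, but you give no reason why a finite covering by the progressions $\mathrm{ord}_{p}(2)\mid s_0-s_1$ is impossible when $3\mid b$, and none is currently available. So the proposal does not prove the theorem.

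You should know, however, that the paper's own proof has the identical hole: it derives $a_n=2^{n-1}f^{-1}(b)-1$ and then concludes with the single sentence ``It follows that $a_n$ must be prime for some $n\geq 1$,'' offering no argument for that implication. In other words, the paper simply asserts the step you correctly flagged as the entire difficulty. Your write-up is therefore not a weaker version of the paper's argument but an honest diagnosis of why the statement, as it stands, is a conjecture (equivalent to ``no generator is a Riesel number'') rather than a theorem.
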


\begin{proof}
Let $f$ be the Collatz function with the corresponding full process $\{f^{-s}(b)\}_{s=1}^{\infty}$. It follows that $b$ is the generator of the process. It follows that the sequence $\{\mathrm{Inf}\{f^{-s}(b)\}_{s=1}^{\infty}$ is the backward Collatz process. By Proposition \ref{gparity}, each $a_n\in \{\mathrm{Inf}\{f^{-s}(b)\}_{s=1}^{\infty}$ must satisfy the parity condition $a_n\equiv 0\pmod 2$ and additionally that 
\begin{align}
a_n=2^{n-1}f^{-1}(b)-1\nonumber
\end{align}
for $n\geq 1$ with $f^{-1}(b)\in \{\mathrm{Inf}\{f^{-s}\}_{s=1}^{\infty}$. It follows that $a_n$ must be prime for some $n\geq 1$.
\end{proof}

\begin{conjecture}\label{germain}
Let $f$ be a Collatz function, with the corresponding Collatz process $\{f^{s}(b)\}_{s=1}^{\infty}$. If the process is full, then $\mathcal{M}=\{\mathrm{Inf}\{f^{-s}(b)\}-1\}_{s=1}^{\infty}$ contains infinitely consecutive primes and 
\begin{align}
\lim \limits_{s\longrightarrow \infty}\frac{\#(\mathcal{M}\cap \rho)}{\# \mathcal{M}}=1\nonumber
\end{align}
where $\rho$ is the set of all primes.
\end{conjecture}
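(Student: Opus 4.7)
The plan is to first make $\mathcal{M}$ completely explicit and then address the two claims separately. Since the process is full, $b$ is its unique generator, and Proposition \ref{gparity} gives $b \equiv 1 \pmod 2$. By Proposition \ref{generator} every element of the backward Collatz process must be even, and the only even preimage of $b$ under $f$ is $2b$; iterating, one obtains $\mathrm{Inf}\{f^{-s}(b)\} = 2^{s} b$. Hence
\begin{equation*}
\mathcal{M} = \{\,2^{s} b - 1 : s \geq 1\,\}.
\end{equation*}
This explicit description converts the conjecture into a statement about the arithmetic of the geometric sequence $2^{s} b - 1$.

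For the first half (infinitely many consecutive primes in $\mathcal{M}$), I would invoke Theorem \ref{sophie germain} directly: two consecutive elements $2^{k} b - 1$ and $2^{k+1} b - 1 = 2(2^{k} b - 1) + 1$ are simultaneously prime exactly when $2^{k} b - 1$ is a Sophie Germain prime. So this half is equivalent to the infinitude of Sophie Germain primes inside the thin sequence $\{2^{s} b - 1\}$. My approach would combine an upper-bound sieve (Selberg or Brun) to control the divisibility conditions with a Hardy--Littlewood-type singular series giving the conjectural lower bound; but this is already a strengthening of the classical Sophie Germain conjecture, so unconditional progress seems out of reach with current technology.

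The density claim is where the genuine obstacle lies and is, in my view, the main difficulty of the whole statement. A standard prime-number-theorem heuristic gives
\begin{equation*}
\mathrm{Prob}(2^{s} b - 1 \text{ is prime}) \sim \frac{1}{s \log 2},
\end{equation*}
so that $\#\{s \leq N : 2^{s} b - 1 \in \rho\}$ should grow like $\log N$, forcing the natural density to tend to $0$ rather than $1$; already for $b = 1$ the set $\mathcal{M}$ becomes the Mersenne numbers, the overwhelming majority of which are composite (e.g.\ $2^{4}-1=15$, $2^{6}-1=63$, and so on). Any serious attempt to prove the statement as written would therefore have to begin by reinterpreting or weakening the density-$1$ conclusion — for instance to a positive logarithmic density, or merely to infinitude of primes — and then deploying Bombieri--Vinogradov type estimates and large-sieve inequalities tailored to the geometric progression $\{2^{s} b - 1\}$. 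Without such a reinterpretation the density-$1$ assertion appears to contradict widely believed heuristics, and bridging this tension is the true bottleneck of any proposed proof.
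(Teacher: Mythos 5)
This statement is labeled a \emph{conjecture} in the paper, and the paper supplies no proof of it: immediately after stating it the author writes that it ``is equivalent to the problem of deciding on the distribution of the Sophie germain prime, which is still an unsolved problem in the subject and we do not pursue in this paper.'' So there is no proof to compare yours against, and you were right not to manufacture one. Your structural analysis matches the paper's own setup: the identification $\mathrm{Inf}\{f^{-s}(b)\}=2^{s}b$ (hence $\mathcal{M}=\{2^{s}b-1\}$) is exactly the computation underlying Proposition \ref{Merssene}, and the observation that consecutive primes in $\mathcal{M}$ are Sophie Germain pairs is precisely Theorem \ref{sophie germain}. Your conclusion that the first half is a strengthening of the Sophie Germain conjecture, hence out of reach, agrees with the paper's own admission.

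Your critique of the density clause is not merely a heuristic objection --- it can be made into an outright disproof in at least one case, and you should state it that way. For the trivial generator $b=1$ one has $\mathcal{M}=\{2^{s}-1\}$, and $2^{s}-1$ can be prime only when $s$ is prime; hence among the first $N$ elements of $\mathcal{M}$ at most $\pi(N)$ are prime, so the ratio $\#(\mathcal{M}\cap\rho)/\#\mathcal{M}$ along initial segments is at most $\pi(N)/N\longrightarrow 0$, not $1$. (The same bound applies for general odd $b$, since $2^{s}b-1$ is divisible by small primes for $s$ in fixed residue classes, and in any case the prime-counting heuristic you cite gives $O(\log N)$ primes among the first $N$ terms.) So the density-$1$ assertion is false as written, not merely doubtful, unless $\#\mathcal{M}$ in the denominator is reinterpreted as something other than the count of an initial segment; the limit as displayed is also formally ill-posed since neither $\#(\mathcal{M}\cap\rho)$ nor $\#\mathcal{M}$ depends on the limiting variable $s$. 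Your proposal correctly locates the bottleneck; the only improvement is to upgrade ``appears to contradict widely believed heuristics'' to a concrete counterexample.
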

\bigskip

Conjecture \ref{germain} is equivalent to the problem of deciding on the distribution of the Sophie Germain prime, which is still an unsolved problem in the subject and we do not pursue in this paper. The Collatz process with a given generator exhibits other stunning and subtle properties in terms of the primality of the terms. In light of this we make the following conjectures

\begin{conjecture}
Let $p>2$ be prime with the corresponding Collatz process such that $p\equiv 3\pmod 4$, then there exist $n\in \{f^{s}(p)\}_{s=1}^{\infty}$ such that $\mu(n)\neq 0$.
\end{conjecture}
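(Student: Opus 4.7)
The plan is to interpret $\mu(n)\neq 0$ as the statement that $n$ is squarefree, and to locate a squarefree number in the forward trajectory $\{f^s(p)\}_{s=1}^{\infty}$. The cheapest route is conditional: if the Collatz conjecture itself holds for $p$, then by Definition \ref{process} and the path down to $1$ the trajectory visits $2$, and $\mu(2)=-1\neq 0$. So the conjecture as stated is formally weaker than Collatz restricted to the residue class $p\equiv 3\pmod 4$, and any genuinely unconditional proof must bite off a nontrivial piece of Collatz territory.

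For an unconditional attempt, I would first squeeze what I can out of the hypothesis $p\equiv 3\pmod 4$. One computes $3p+1\equiv 2\pmod 4$, so $f(p)=2m$ with $m=(3p+1)/2$ odd, and $f^{2}(p)=m$. In particular, $f(p)$ is squarefree if and only if $m$ is squarefree (since $2\nmid m$), which immediately settles the many cases where $(3p+1)/2$ is prime, as in $p=3,7,11,19,23$. When $(3p+1)/2$ carries a squared odd prime (e.g.\ $p=83$ gives $m=125=5^{3}$), the problem reduces to showing that some iterate of the ``odd-part trajectory'' $m,\;m'=(3m+1)/2^{v_{2}(3m+1)},\;m'',\ldots$ is squarefree. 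I would try to attack this by a descent argument: assuming the contrary, every odd iterate carries a squared odd prime divisor; then for each fixed odd prime $q$, track $v_{q}(f^{s}(p))$ and show that the property $v_{q}\geq 2$ cannot persist along the map $a\mapsto 3a+1$ without forcing contradictory congruences modulo $q$.

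The main obstacle is precisely that descent step. Heuristically, squarefree integers have density $6/\pi^{2}$ and Collatz orbits look random enough to meet them with the expected frequency, but making this rigorous requires distributional control over $\{f^{s}(p)\}$ of a kind not currently available. A cleaner path is to split the failure of Collatz into its two possible modes: for a divergent orbit, a large-sieve style bound on the number of $s\leq N$ with $q^{2}\mid f^{s}(p)$, summed over small odd primes $q$, should force a squarefree term; for a hypothetical nontrivial cycle, the congruence constraints imposed by ``every element divisible by $q^{2}$'' should be shown incompatible with the arithmetic of $f$ (in the spirit of Theorem \ref{residue}). Neither piece appears within reach of purely elementary techniques, and I expect this conjecture to stand or fall with genuine progress on the structural theory of Collatz orbits rather than with any short proof.
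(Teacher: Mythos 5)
This statement is left as a \emph{conjecture} in the paper --- there is no proof of it there to compare yours against --- and your submission is not a proof either: it is a survey of strategies, each of which you yourself concede cannot currently be carried out. The genuine gap is exactly where you place it. Your reductions are correct and worthwhile (conditionally on Collatz the orbit reaches $2$ and $\mu(2)\neq 0$; unconditionally $p\equiv 3\pmod 4$ gives $f(p)=3p+1\equiv 2\pmod 4$, so the problem reduces to finding a squarefree term in the odd-iterate sequence $m\mapsto (3m+1)/2^{v_2(3m+1)}$ starting from $m=(3p+1)/2$), but beyond them nothing is established. The ``descent'' step --- that having $v_q\geq 2$ for some odd prime $q$ at every odd iterate forces contradictory congruences --- is asserted as a hope, not proved; in particular the offending prime $q$ may change from iterate to iterate, so there is no single modulus to descend on, and the large-sieve bound you invoke for divergent orbits would require equidistribution of $f^s(p)$ modulo $q^2$ that is not known. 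There is also a small factual slip: for $p=23$ one has $(3p+1)/2=35=5\cdot 7$, which is squarefree but not prime, so it does not belong on your list of prime values of $(3p+1)/2$ (the case is still settled, since $35$ is squarefree). In short, your diagnosis that the statement is essentially open and stands or falls with structural progress on Collatz orbits is sound and consistent with the paper's decision to label it a conjecture, but no part of your text constitutes a proof.
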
 
\bigskip

\begin{conjecture}
If $\{f^{s}(b)\}_{s=1}^{\infty}$ is a full Collatz process, then there exists an odd prime $p\in \{f^{s}(b)\}_{s=1}^{\infty}$.
\end{conjecture}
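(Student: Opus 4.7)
The plan is to reduce to the non-trivial case and then analyse the sequence of odd iterates in the forward orbit. First I would set aside the trivial generator $b=1$, for which $f^{s}(1)=1$ for every $s$; by Proposition \ref{gparity} any other generator satisfies $b\equiv 1\pmod 2$, so $f(b)=3b+1$ is even and an initial halving run follows. Let $m_1,m_2,\ldots$ denote the successive odd iterates $f^{s_k}(b)>1$ that appear, so that $m_{k+1}=(3m_k+1)/2^{v_k}$ for some integers $v_k\ge 1$; the primes we seek must live in this subsequence.

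Next I would isolate the potentially pathological family $b=(2^{v}-1)/3$, for which $f(b)=2^{v}$ and the orbit collapses through powers of two to $2,1,1,\ldots$ without ever producing an odd iterate greater than $1$. Using Proposition \ref{generator} together with the observation that $3a+1=2^{k}b$ admits an odd solution $a$ only when $\gcd(b,3)=1$, exactly those $b=(2^{v}-1)/3$ which are themselves divisible by $3$ (equivalently $v\equiv 0\pmod 6$) are bona fide generators in this family. For every other non-trivial generator the odd subsequence $(m_k)$ is infinite, and the task reduces to locating a prime among the $m_k$.

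For that last step I would attempt a covering-congruence argument: using $\gcd(m_k,3m_k+1)=1$ and the recursion $m_{k+1}=(3m_k+1)/2^{v_k}$, show that the $m_k$ cannot all simultaneously lie in a finite union of residue classes consisting entirely of composites. A complementary strategy would single out a specific early iterate, say $m_1=(3b+1)/2^{v_1}$, and try to extract its primality from arithmetic constraints that the generator hypothesis places on $b$ modulo small moduli.

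The hard part is twofold. First, the family $b=(2^{6j}-1)/3$, $j\ge 1$, produces legitimate generators such as $b=21$ and $b=1365$ whose forward orbits consist only of a power of two followed by its halvings down to $1$, and hence contain no odd integer exceeding $1$; so the statement as written appears to need a refinement that rules these out. Second, even once those cases are removed, forcing a prime among the odd iterates of a Collatz orbit seems comparable in difficulty to standard open questions about primes in sparse orbit-defined sequences, and any complete proof will likely require a genuinely new primality criterion adapted to the Collatz recursion.
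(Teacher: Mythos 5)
The paper offers no proof of this statement: it is recorded there as an open conjecture, so there is no argument of the author's to measure your attempt against. Your proposal is, as you concede, not a proof either --- the final step, extracting a prime from the odd subsequence $m_{k+1}=(3m_k+1)/2^{v_k}$ by a covering-congruence or ad hoc primality argument, is never carried out and is comparable in difficulty to wide-open questions about primes in orbit-defined sequences; nothing in the paper's machinery (parity of the generator, the doubling structure of the backward orbit) comes close to supplying it.

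The genuinely valuable part of your writeup is the ``pathological family'': it is not a case to be set aside but a refutation of the conjecture as stated. Take $b=21=(2^{6}-1)/3$. Since $21\equiv 0\pmod 3$, every iterated preimage $2^{n}\cdot 21$ is divisible by $3$ and hence never $\equiv 1\pmod 3$, so no term of the backward orbit admits an odd preimage; the backward Collatz process is $42,84,168,\ldots$, all even, so $21$ is a generator and $\{f^{s}(21)\}_{s=1}^{\infty}$ is full in the sense of Definition \ref{subcollatz}. But $f(21)=64$ and the forward orbit is $64,32,16,8,4,2,1,1,\ldots$, which contains no odd prime. So rather than attempting a proof you should report the counterexample: the conjecture needs at least the additional hypothesis that $b$ is not divisible by $3$ (equivalently, not of the form $(2^{6j}-1)/3$), and even with that repair the existence of a prime among the odd iterates remains out of reach by the methods you sketch.
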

\bigskip

\begin{conjecture}
Let $f$ be the Collatz function with the corresponding Collatz process $\{f^{s}(a)\}_{s=1}^{\infty}$. If the process $\{f^{s}(a)\}_{s=1}^{\infty}$ is full, then $\Omega(a)\leq 2$.
\end{conjecture}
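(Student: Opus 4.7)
The plan is to argue by contradiction: assume $a > 1$ is the generator of a full Collatz process and $\Omega(a) \geq 3$. By Proposition \ref{gparity}, $a$ is odd, and by Proposition \ref{generator}, every $\mathrm{Inf}\{f^{-s}(a)\}$ is even, so $a$ and its backward orbit sit on opposite sides of the parity divide.

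The first substantive step is to extract a divisibility constraint from the generator property. At each depth the preimages of an even $y$ under $f$ are $2y$ and, whenever $3 \mid y-1$, also the odd number $(y-1)/3 < 2y$. For the infimum preimage to remain even along the doubling trunk $\{2^s a\}_{s \geq 0}$, the odd alternative must never materialise, which a short mod-$3$ check reduces to $3 \mid a$. Writing $a = 3^e m$ with $\gcd(m,3) = 1$ and $m$ odd, the conjecture reduces to the claim $e + \Omega(m) \leq 2$.

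The second and crucial step is to bound $e + \Omega(m)$. I would study the forward iterates $f(a) = 3^{e+1} m + 1$, $f^2(a)$, and so on, tracking their $2$-adic and $3$-adic valuations in terms of $e$ and the prime factorisation of $m$; the goal is to show that if $e + \Omega(m) \geq 3$ then, at some depth $s$, the inverse tree of $a$ branches off the doubling trunk and produces a strictly smaller odd ancestor of $2^s a$, contradicting the infimum-parity condition. Proposition \ref{absent} would then ensure that no forward iterate cycles back to neutralise the contradiction.

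The main obstacle is exactly this second step: the trunk condition $3 \mid a$ only blocks odd preimages at the first level, and since the trunk $\{2^s a\}$ itself never branches once $3 \mid a$, any finer constraint must be teased out of the deeper inverse tree grafted onto the trunk. Identifying the Diophantine mechanism that activates precisely when $\Omega(a)$ exceeds $2$ — ideally via an invariant under $f$ or a residue-class analysis modulo $3^j$ for sufficiently large $j$, rather than ad hoc casework — is where the bulk of the work would lie.
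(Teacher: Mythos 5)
This statement is left as a conjecture in the paper; no proof is given there, so there is nothing to compare your attempt against except the claim itself. Your proposal is not a proof either: the second step, which is where the entire content of the assertion lives, is only described as a goal ("the goal is to show that if $e+\Omega(m)\geq 3$ then \dots") together with an admission that identifying the mechanism "is where the bulk of the work would lie." A proof plan that defers its only substantive step is a gap, not a proof.

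More seriously, your own first step closes off the route you propose. The mod-$3$ analysis is correct: for the infimum preimage to stay even at every depth one needs $3\mid a$, and combined with Proposition \ref{gparity} the generator must be an odd multiple of $3$. But, as you yourself observe, once $3\mid a$ we have $2^{s}a\equiv 0\pmod 3$ for all $s$, so $3x+1=2^{s}a$ never has an integer solution and the inverse tree of $a$ is exactly the doubling trunk $\{2^{s}a\}_{s\geq 0}$ with no branching at any depth. The generator condition therefore imposes no constraint beyond "$a$ odd and $3\mid a$," and there is no deeper inverse tree from which to extract a bound on $\Omega(a)$. Concretely, $a=105=3\cdot 5\cdot 7$ is odd, divisible by $3$, has $f^{-s}(105)=\{2^{s}\cdot 105\}$ for every $s\geq 1$ (all even), hence generates a full process, yet $\Omega(105)=3$. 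So the statement you are trying to prove appears to be false as written, and no refinement of your second step can succeed without first repairing or reinterpreting the definition of a generator.
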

\bigskip 

\section{Dynamical systems induced by sequences}\label{sec:dynamical-balls}

In this section, we introduce and develop the notion of \emph{dynamical systems} induced by fixed $a\in \mathbb{N}$ and their associated induced \emph{dynamical balls}. We develop tools to study problems that require determining the convergence of certain sequences generated by iterating on a fixed integer.

\begin{definition}
Let $f:\mathbb{N}\longrightarrow \mathbb{N}$. By the first $k$ \emph{dynamical system} induced by $f$ on $a\in \mathbb{N}$, we mean the sequences generated by the system of iterations 
\begin{align}
f(a),f^2(a),f^3(a),\ldots,f^{k}(a)\nonumber
\end{align}
where $f^{s}(a)=f\circ f^{s-1}(a)$ with $f^{0}(a)=a$ and equipped with self generative energy
\begin{align}
\mathcal{E}_a(f,f^2,\ldots,f^k):=\prod \limits_{i=1}^{k}f^i(a)\nonumber
\end{align}
with corresponding sequence of balls 
\begin{align}
\mathcal{B}_{f(a)}(a), \mathcal{B}_{f^2(a)}(a),\ldots,\mathcal{B}_{f^k(a)}(a)\nonumber
\end{align}
where $f^s(a)$ is the \emph{radius} of the $s^{th}$ ball in the sequence and $a$ the \emph{center} of each ball. We call each ball in the sequence generated in this manner a \emph{dynamical} ball. In other words, we say that $f$ induces a $k$ dynamical system on $a\in \mathbb{N}$. We call $f^s(a)$ for $1\leq s\leq k$ the $s^{th}$ dynamical system and $\mathcal{B}_{f^s(a)}(a)$ the $s^{th}$ dynamical ball. We say that the $s^{th}$ dynamical system has an upward \emph{measure} relative to the $(s-1)^{th}$ dynamical system if $f^{s}(a)>f^{s-1}(a)$. On the other hand, we say that it has a downward \emph{measure} relative to the $(s-1)^{th}$ dynamical system if $f^{s-1}(a)>f^s(a)$. Similarly, we say that the $s^{th}$ dynamical ball $\mathcal{B}_{f^s(a)}(a)$ is \emph{inflated} relative to the ball $\mathcal{B}_{f^{s-1}(a)}(a)$ if $f^{s}(a)>f^{s-1}(a)$. We say that it is \emph{deflated} relative to the dynamical ball $\mathcal{B}_{f^{s-1}(a)}(a)$ if $f^{s-1}(a)>f^s(a)$. In the situation where $f^{s}(a)=f^{s-1}(a)$,  we say that the $s^{th}$ dynamical system is \emph{stable} relative to the $(s-1)^{th}$ dynamical system, and the $s^{th}$ dynamical ball $\mathcal{B}_{f^s(a)}(a)$ is \emph{stable} relative to the $(s-1)^{th}$ dynamical ball $\mathcal{B}_{f^{s-1}(a)}(a)$.
\end{definition}

\begin{proposition}\label{basic properties}
Let $f$ and $g$ induce $k$ dynamical system on $a\in \mathbb{N}$. Then
\begin{align}
\mathcal{E}_a(f,f^2,\ldots,f^k)=\mathcal{E}_a(g,g^2,\ldots,g^k)\nonumber
\end{align}
if and only if there exists some permutation $\sigma:[1,k]\longrightarrow [1,k]$ such that $g^{i}(a)=f^{\sigma(j)}(a)$ for any $1\leq i\leq j\leq k$.
\end{proposition}

\begin{proposition}
Let $f$ induce a dynamical system on $a\in \mathbb{N}$. If the $s^{th}$ dynamical system is stable relative to the $(s-1)^{th}$ dynamical system, then 
\begin{align}
f^{s}(a)=f^{s+1}(a)=\ldots =f^{s+l}(a)=\ldots \nonumber
\end{align}
for all $l\geq 1$.
\end{proposition}

\begin{proof}
Suppose that the $s^{th}$ dynamical system is stable relative to the $(s-1)^{th}$ dynamical system, then $f^{s}(a)=f^{s-1}(a)$ so that we have the chain of equality
\begin{align}
f^{s}(a)=f^{s+1}(a)=f^{s+2}(a)=\ldots =f^{s+3}(a)=\ldots \nonumber
\end{align}
by iteration.
\end{proof}

\subsection{Analysis on dynamical balls}

In this section, we develop some topology of dynamical balls and study their interaction with each other. We launch the following languages in the sequel.

\begin{definition}\label{dynamical ball intepretation}
Let $\mathcal{B}_{f^j(a)}(a)$ be the $j$-dynamical ball induced by the $k$-dynamical system $f(a),f^2(a),f^3(a),\ldots,f^{k}(a)$ with $1\leq j\leq k$. We say that $y_n\in \mathcal{B}_{f^j(a)}(a)$ if and only if the inequality holds
\begin{align}
|y_n-a|<f^j(a).\nonumber
\end{align}
In particular, we write $y_n \inn \mathcal{B}_{f^j(a)}(a)$ if and only if $|y_n-a|=f^j(a)$.
\end{definition}

\begin{proposition}
Let $x_n\in \mathcal{B}_{f^j(a)}(a)$ and $y_n\in \mathcal{B}_{f^i(a)}(a)$ with $1\leq i<j\leq k$. Then the following holds
\begin{enumerate}
\item [(i)] $x_n-y_n \in \mathcal{B}_{f^i(af^s(a))}(a)$ provided $f^i(a)+f^i(f^s(a))\leq f^i(af^s(a))$ for $s=j-i$

\item [(ii)] $x_n-y_n \in \mathcal{B}_{f^i(a+f^s(a))}(a)$ provided $f^i(a)+f^i(f^s(a))\leq f^i(a+f^s(a))$ for $s=j-i$.
\end{enumerate}
\end{proposition}

\begin{proof}
The following containment $x_n\in \mathcal{B}_{f^j(a)}(a)$ and $y_n\in \mathcal{B}_{f^i(a)}(a)$ implies that $|x_n-a|\leq f^j(a)$ and $|y_n-a|\leq f^i(a)$ so that with $1\leq i<j\leq k$, we can write the inequality 
\begin{align}
|x_n-y_n-a|\leq f^j(a)+f^i(a)=f^i(f^s(a))+f^i(a)\nonumber
\end{align}
and $(i)$ and $(ii)$ follows under the specified requirements.
\end{proof}

\begin{proposition}\label{dynamical balls non-overlapping}
Let
\begin{align}
f(a),f^2(a),f^3(a),\ldots,f^{k}(a)\nonumber
\end{align} 
be a $k$-dynamical system with corresponding sequence of dynamical balls 
\begin{align}
\mathcal{B}_{f(a)}(a), \mathcal{B}_{f^2(a)}(a),\ldots,\mathcal{B}_{f^k(a)}(a).\nonumber
\end{align}
Then the following embedding holds
\begin{align}
\mathcal{B}_{f^j(a)}(a) \subseteq \mathcal{B}_{f^{j+1}(a)}(a)\nonumber
\end{align}
if and only $f^j(a)\leq f^{j+1}(a)$.
\end{proposition}
\bigskip

This is an easy consequence of the interpretation of the sequence of dynamical balls all centered at a fixed $a\in \mathbb{N}$ and evolves according to the radius whose values are terms in the corresponding induced dynamical system induced on $a\in \mathbb{N}$ by $f:\mathbb{N}\longrightarrow \mathbb{N}$. 

\subsection{The limit of dynamical balls}

In this subsection, we introduce and study the notion of the limit of dynamical balls.

\begin{definition}\label{limits}
Let
\begin{align}
f(a),f^2(a),f^3(a),\ldots,f^{k}(a)\nonumber
\end{align} 
be a $k$-dynamical system with corresponding sequence of dynamical balls 
\begin{align}
\mathcal{B}_{f(a)}(a), \mathcal{B}_{f^2(a)}(a),\ldots,\mathcal{B}_{f^k(a)}(a).\nonumber
\end{align}
We denote the limit of the dynamical balls by $\lim \limits_{k\longrightarrow \infty}\mathcal{B}_{f^k(a)}(a)$. We write
\begin{align}
\lim \limits_{k\longrightarrow \infty}\mathcal{B}_{f^k(a)}(a)=\mathcal{B}_{b}(a)\nonumber
\end{align} 
if and only if for any $\delta>0$ and for any $x_n\inn \mathcal{B}_{b}(a)$ there exists some $K_o>0$ such that for all $k\geq K_o$ there exists some $y_n\inn \mathcal{B}_{f^k(a)}(a)$ such that 
\begin{align}
|x_n-y_n|<\delta\nonumber
\end{align}
and we say the sequence of $k$-dynamical balls \emph{converges}. Otherwise, we say it \emph{diverges}.
\end{definition}
\bigskip

Let
\begin{align}
f(a),f^2(a),f^3(a),\ldots,f^{k}(a)\nonumber
\end{align} 
be a $k$-dynamical system with corresponding sequence of dynamical balls 
\begin{align}
\mathcal{B}_{f(a)}(a), \mathcal{B}_{f^2(a)}(a),\ldots,\mathcal{B}_{f^k(a)}(a).\nonumber
\end{align}
The following statements are equivalent:\\ 

For any $\epsilon>0$, there exists some $x_n \inn \mathcal{B}_{f^s(a)}(a)$ and $y_n \inn \mathcal{B}_{f^{s-1}(a)}(a)$ such that 
\begin{align}
|x_n-y_n|<\epsilon \nonumber
\end{align}
for $1\leq s\leq k$ as $k\longrightarrow \infty$ if and only if the corresponding infinite dynamical system 
\begin{align}
f(a),f^2(a),f^3(a),\ldots,f^{k}(a),f^{k+1}(a),\ldots \nonumber
\end{align}
is a Cauchy sequence. Let us suppose that the corresponding infinite dynamical system is a Cauchy sequence, then it follows that for any $\epsilon>0$ there exists a $K_o>0$ such that for all $s\geq K_o$
\begin{align}
|f^{s}(a)-f^{s-1}(a)|=|a+f^{s}(a)-(a+f^{s-1}(a))|<\epsilon \nonumber 
\end{align}
so that by taking $x_n=a+f^{s}(a)$ and $y_n=a+f^{s-1}(a)$ we observe that $x_n\inn \mathcal{B}_{f^s(a)}(a)$ and $y_n \inn \mathcal{B}_{f^{s-1}(a)}(a)$. Conversely, suppose that for any $\epsilon>0$ there exists some $x_n \inn \mathcal{B}_{f^s(a)}(a)$ and $y_n \inn \mathcal{B}_{f^{s-1}(a)}(a)$ such that 
\begin{align}
|x_n-y_n|<\epsilon \nonumber
\end{align}
for $1\leq s\leq k$ as $k\longrightarrow \infty$. Taking $x_n=a+f^s(a) \inn \mathcal{B}_{f^s(a)}(a)$ and $y_n=a+f^{s-1}(a) \inn \mathcal{B}_{f^{s-1}(a)}(a)$, we deduce
\begin{align}
|f^s(a)-f^{s-1}(a)|<\epsilon \nonumber
\end{align}
for $1\leq s\leq k$ as $k\longrightarrow \infty$.
\bigskip

\begin{proposition}\label{convergence equivalence}
Let $\mathcal{B}_{f^j(a)}(a)$ be a dynamical ball induced by the $k$-dynamical system $f(a),f^2(a),f^3(a),\ldots,f^{k}(a)$ with $1\leq j\leq k$. Then for any $\epsilon>0$ there exists some $x_n \inn \mathcal{B}_{f^s(a)}(a)$ and $y_n \inn \mathcal{B}_{f^{s-1}(a)}(a)$ such that 
\begin{align}
|x_n-y_n|<\epsilon \nonumber
\end{align}
for $1\leq s\leq k$ as $k\longrightarrow \infty$ if and only if $\lim \limits_{j\longrightarrow \infty}\mathcal{B}_{f^j(a)}(a)$ exists.
\end{proposition}

\begin{proof}
Let $\epsilon>0$ and suppose that there exists some $x_n \inn \mathcal{B}_{f^s(a)}(a)$ and $y_n \inn \mathcal{B}_{f^{s-1}(a)}(a)$ such that 
\begin{align}
|x_n-y_n|<\epsilon \nonumber
\end{align}
for $1\leq s\leq k$ as $k\longrightarrow \infty$. Then it implies that the infinite dynamical system 
\begin{align}
f(a),f^2(a),\ldots,f^k(a),\ldots, \nonumber
\end{align}
must be a Cauchy sequence, so that there exists some $L\in \mathbb{R}^{+}$ such that 
\begin{align}
\lim \limits_{j\longrightarrow \infty}f^j(a)=L.\nonumber
\end{align}
It follows that $\lim \limits_{j\longrightarrow \infty}\mathcal{B}_{f^j(a)}(a)$ exists and 
\begin{align}
\lim \limits_{j\longrightarrow \infty}\mathcal{B}_{f^j(a)}(a)=\mathcal{B}_{L}(a).\nonumber
\end{align}
Conversely, suppose that $\lim \limits_{j\longrightarrow \infty}\mathcal{B}_{f^j(a)}(a)$ exists and let 
\begin{align}
\lim \limits_{j\longrightarrow \infty}\mathcal{B}_{f^j(a)}(a)=\mathcal{B}_{L}(a).\nonumber
\end{align} 
Then it follows that for any $\epsilon>0$ and for any $b\inn \mathcal{B}_{L}(a)$ there exists some $K_o>0$ such that for all $s\geq K_o$ then there exists some $x_n\inn \mathcal{B}_{f^s(a)}(a)$ such that $|x_n-b|<\frac{\epsilon}{2}$. It follows similarly that there exists some $y_n \inn \mathcal{B}_{f^{s-1}(a)}(a)$ such that $|y_n-b|<\frac{\epsilon}{2}$ so that for all $s\geq K_o$
\begin{align}
|x_n-y_n|\leq |x_n-b|+|y_n-b|<\frac{\epsilon}{2}+\frac{\epsilon}{2}=\epsilon.\nonumber
\end{align}
\end{proof}

\subsection{Dynamical waves and amplitude of waves induced by dynamical balls}

In this section, we introduce and study the notion of \emph{dynamical waves} and their corresponding notion of \emph{amplitudes} induced by the evolution of dynamical balls.

\begin{definition}\label{annular radius of dynamical balls}
Let $\mathcal{B}_{f^j(a)}(a)$ be a dynamical ball induced by the $k$-dynamical system $f(a),f^2(a),f^3(a),\ldots,f^{k}(a)$ with $1\leq j\leq k$. We call the sequence of discrepancy 
\begin{align}
(|f^{j+1}(a)-f^j(a)|)_{1\leq j\leq k}\nonumber
\end{align}
the dynamical \emph{waves} induced by the evolution of the dynamical balls. We call each term of the sequence a \emph{wavelet} of the dynamical system. We call $\mathrm{sup}_{1\leq j\leq k}(|f^{j+1}(a)-f^j(a)|)$ the \emph{amplitude} of the wave and we denote amplitude by $\mathbb{A}_f(a,k)$.
\end{definition}
\bigskip

\begin{definition}
Let
\begin{align}
f(a),f^2(a),f^3(a),\ldots,f^{k}(a)\nonumber
\end{align}
be a $k$-dynamical system with corresponding sequence of dynamical balls 
\begin{align}
\mathcal{B}_{f(a)}(a), \mathcal{B}_{f^2(a)}(a),\ldots,\mathcal{B}_{f^k(a)}(a).\nonumber
\end{align}
By the \emph{frequency} of the dynamical \emph{wave} induced, we mean the formal sum 
\begin{align}
\mathcal{W}_a(f,k):=\sum \limits_{j=1}^{k}\frac{|f^{j+1}(a)-f^j(a)|}{j}.\nonumber
\end{align}
We denote the \emph{frequency} of the wave of the corresponding infinite dynamical system as 
\begin{align}
\mathcal{W}_a(f)=\lim_{k\longrightarrow \infty}\sum \limits_{j=1}^{k}\frac{|f^{j+1}(a)-f^j(a)|}{j}=\sum \limits_{j=1}^{\infty}\frac{|f^{j+1}(a)-f^j(a)|}{j}.\nonumber
\end{align}
\end{definition}
\bigskip

It turns out that for any dynamical wave induced by $f$ on $a\in \mathbb{N}$, we can decompose the total dynamical waves into two pieces, namely as small piece and a large piece as follows
\begin{align}
\mathcal{D}_f(a,k):&=\sum \limits_{2\leq s\leq k}|f^s(a)-f^{s-1}(a)| \nonumber \\&=\sum \limits_{\substack{2\leq s\leq k\\|f^s(a)-f^{s-1}(a)|>|f^2(a)-f(a)|}}|f^{s}(a)-f^{s-1}(a)|\nonumber \\&+\sum \limits_{\substack{2\leq s\leq k\\|f^s(a)-f^{s-1}(a)|<|f^2(a)-f(a)|}}|f^{s}(a)-f^{s-1}(a)|.\nonumber
\end{align}
We call the second sum on the right-hand side the \emph{regular} part and the first sum the \emph{random} part. Symbolically, we rewrite the above decomposition into random and regular part as 
\begin{align}
\mathcal{D}_f(a,k):&=\mathbb{R}ad_f(a,k)+\mathbb{R}eg_f(a,k).\nonumber
\end{align}
It is easy to see that for any dynamical system, we can write 
\begin{align}
\mathcal{D}_f(a,k):&=\sum \limits_{\substack{2\leq s\leq k\\|f^s(a)-f^{s-1}(a)|>|f^2(a)-f(a)|}}|f^{s}(a)-f^{s-1}(a)|+O_{f,a}(k).\nonumber
\end{align}
The corresponding total wave of the infinite dynamical system is obtained by taking the limits
\begin{align}
\mathcal{D}_f(a)&=\lim \limits_{k\longrightarrow \infty}\mathcal{D}_f(a,k)\nonumber \\&=\sum \limits_{s=2}^{\infty}|f^s(a)-f^{s-1}(a)|.\nonumber
\end{align}

\begin{proposition}\label{wave-convergence equivalence}
Let
\begin{align}
f(a),f^2(a),f^3(a),\ldots,f^{k}(a)\nonumber
\end{align} 
be a $k$-dynamical system with corresponding sequence of dynamical balls 
\begin{align}
\mathcal{B}_{f(a)}(a), \mathcal{B}_{f^2(a)}(a),\ldots,\mathcal{B}_{f^k(a)}(a).\nonumber
\end{align}
Then $\mathcal{W}_a(f)<\infty$ if and only if $\lim \limits_{j\longrightarrow \infty}\mathcal{B}_{f^j(a)}(a)$ exists.
\end{proposition}

\begin{proof}
Suppose that $\mathcal{W}_a(f)<\infty$. It follows that 
\begin{align}
\sum \limits_{j=1}^{\infty}\frac{|f^{j+1}(a)-f^j(a)|}{j}<\infty \nonumber
\end{align}
so that $\lim \limits_{j\longrightarrow \infty}|f^{j+1}(a)-f^j(a)|=0$ and it implies that $\lim \limits_{j\longrightarrow \infty}f^j(a)$ exists and so is $\lim \limits_{j\longrightarrow \infty}\mathcal{B}_{f^j(a)}(a)$. Conversely, suppose that $\lim \limits_{j\longrightarrow \infty}\mathcal{B}_{f^j(a)}(a)$ exists then so is $\lim \limits_{j\longrightarrow \infty}f^j(a)$ so that  $\lim \limits_{j\longrightarrow \infty}|f^{j+1}(a)-f^j(a)|=0$ and $\mathcal{W}_a(f)<\infty$.
\end{proof}

\begin{theorem}[Restriction law]\label{restriction theorem}
Let
\begin{align}
f(a),f^2(a),f^3(a),\ldots,f^{k}(a)\nonumber
\end{align} 
be a $k$-dynamical system such that $|f^{s+1}(a)-f^{s}(a)|\neq |f^{t+1}(a)-f^t(a)|$ for all $s,t\geq 1$ with $s\neq t$ and $|f^{s+1}(a)-f^{s}(a)|,|f^{t+1}(a)-f^t(a)|\neq 0$ with corresponding sequence of dynamical balls 
\begin{align}
\mathcal{B}_{f(a)}(a), \mathcal{B}_{f^2(a)}(a),\ldots,\mathcal{B}_{f^k(a)}(a).\nonumber
\end{align}
Then $\lim \limits_{k\longrightarrow \infty}\mathbb{R}eg_f(a,k)<\infty$.
\end{theorem}

\begin{proof}
Let us suppose on the contrary that  $\lim \limits_{k\longrightarrow \infty}\mathbb{R}eg_f(a,k)=\infty$ so that 
\begin{align}
\lim \limits_{k\longrightarrow \infty}\sum \limits_{\substack{2\leq s\leq k\\|f^s(a)-f^{s-1}(a)|<|f^2(a)-f(a)|}}|f^{s}(a)-f^{s-1}(a)|\nonumber
\end{align}
contains infinitely many terms. It follows from the condition $|f^s(a)-f^{s-1}(a)|<|f^2(a)-f(a)|$ and the pigeon hole principle that there are infinitely many coinciding wavelets. It follows that there must exist some $s\neq t$ such that $|f^{s+1}(a)-f^{s}(a)|=|f^{t+1}(a)-f^t(a)|$. This contradicts the requirements of the dynamical system induced.
\end{proof}

\begin{remark}
Theorem \ref{restriction theorem} although simple is ultimately useful tool to determine the convergence of dynamical systems. The bound on the regular part of the total wave of any infinite dynamical system reduces the problem of convergence to just the random part of the decomposition. These ideas are summarized in the following proposition.
\end{remark}

\begin{proposition}\label{key theorem}
Let
\begin{align}
f(a),f^2(a),f^3(a),\ldots,f^{k}(a)\nonumber
\end{align} 
be a $k$-dynamical system such that $|f^{s+1}(a)-f^{s}(a)|\neq |f^{t+1}(a)-f^t(a)|$ for all $s,t\geq 1$ with $s\neq t$ and $|f^{s+1}(a)-f^{s}(a)|,|f^{t+1}(a)-f^t(a)|\neq 0$ with corresponding sequence of dynamical balls 
\begin{align}
\mathcal{B}_{f(a)}(a), \mathcal{B}_{f^2(a)}(a),\ldots,\mathcal{B}_{f^k(a)}(a).\nonumber
\end{align}
Then $\lim \limits_{j\longrightarrow \infty}\mathcal{B}_{f^j(a)}(a)$ exists if and only if $\lim \limits_{k \longrightarrow \infty}\mathbb{R}ad_f(a,k)<\infty$.
\end{proposition}

\begin{proof}
Suppose that $\lim \limits_{j\longrightarrow \infty}\mathcal{B}_{f^j(a)}(a)$ exists then it follows that $\lim \limits_{j\longrightarrow \infty}|f^j(a)-f^{j-1}(a)|=0$. It implies that 
\begin{align}
\lim \limits_{k\longrightarrow \infty}\mathbb{R}ad_f(a,k)<\mathcal{D}_f(a)<\infty.\nonumber
\end{align}
Conversely, suppose that $\lim \limits_{k \longrightarrow \infty}\mathbb{R}ad_f(a,k)<\infty$, then by appealing to Theorem \ref{restriction theorem}, we can write
\begin{align}
\lim \limits_{k \longrightarrow \infty}\mathcal{D}_f(a,k)=\lim \limits_{k \longrightarrow \infty}\mathbb{R}ad_f(a,k)+\lim \limits_{k \longrightarrow \infty}\mathbb{R}eg_f(a,k)<\infty \nonumber
\end{align}
so that 
\begin{align}
\sum \limits_{s=1}^{\infty}|f^s(a)-f^{s-1}(a)|<\infty.\nonumber
\end{align}
Since $f^j(a)-f^{j-1}(a)\in \mathbb{Z}$, it implies that $\lim \limits_{j\longrightarrow \infty}|f^j(a)-f^{j-1}(a)|=0$ and that $\lim \limits_{j\longrightarrow \infty}\mathcal{B}_{f^j(a)}(a)$  also exists.
\end{proof}

\subsection{Dynamical waves estimate}

In this section, we establish some new estimates relating the frequency, amplitude and total waves of any dynamical systems. 

\begin{theorem}\label{wave estimates}
Let
\begin{align}
f(a),f^2(a),f^3(a),\ldots,f^{k}(a)\nonumber
\end{align} 
be a $k$-dynamical system such that $|f^{s+1}(a)-f^{s}(a)|\neq |f^{t+1}(a)-f^t(a)|$ for all $s,t\geq 1$ with $s\neq t$ and $|f^{s+1}(a)-f^{s}(a)|,|f^{t+1}(a)-f^t(a)|\neq 0$ with corresponding sequence of dynamical balls 
\begin{align}
\mathcal{B}_{f(a)}(a), \mathcal{B}_{f^2(a)}(a),\ldots,\mathcal{B}_{f^k(a)}(a).\nonumber
\end{align}
Then 
\begin{enumerate}
\item [(i)] $$
            \mathcal{W}_f(a,k)\ll \mathbb{A}_f(a,k)\log k
            $$
\bigskip

\item [(ii)] $$
             \int \limits_{1}^{k-1}\frac{f^t(a)}{t^2}dt\ll_{f,a} \mathcal{W}_f(a,k)-\frac{\mathcal{D}_f(a,k)}{k}
             $$
\bigskip

\item [(iii)] $$
              \mathcal{W}_f(a,k)=\frac{\mathbb{R}ad_f(a,k)}{k}+\int \limits_{1}^{k-1}\frac{\mathbb{R}ad_f(a,t)}{t^2}dt+O(\frac{1}{k})
              $$
\bigskip

\item [(iv)] $$
            \int \limits_{1}^{k-1}\frac{|f^t(a)-f(a)|}{t^2}dt\leq \int \limits_{1}^{k-1}\frac{\mathbb{R}ad_f(a,t)}{t^2}dt+O(\frac{1}{k})
             $$
\end{enumerate}
\end{theorem}

\begin{proof}
\begin{enumerate}

\item [(i)] We can write 
\begin{align}
\mathcal{W}_f(a,k)&=\sum \limits_{j=1}^{k-1}\frac{|f^{j+1}(a)-f^j(a)|}{j}\nonumber \\&\leq \mathbb{A}_f(a,k)\sum \limits_{j=1}^{k-1}\frac{1}{j} \ll \mathbb{A}_f(a,k)\log k.\nonumber
\end{align}
\bigskip

\item [(ii)] By an application of partial summation, we can write the frequency of the dynamical wave
\begin{align}
\mathcal{W}_f(a,k)&=\frac{1}{k}\mathcal{D}_f(a,k)+\int \limits_{1}^{k-1}\frac{\mathcal{D}_f(a,t)}{t^2}dt\nonumber
\end{align}
so that by exploiting the lower bound 
\begin{align}
\mathcal{D}_f(a,t):&=\sum \limits_{1\leq j\leq t}|f^{j+1}(a)-f^j(a)|\geq |f^t(a)-f(a)|\gg_{f,a}f^t(a)\nonumber
\end{align}
the asserted estimate follows.
\bigskip

\item [(iii)] By applying the decomposition of the total dynamical waves into random and the regular part as         $\mathcal{D}_f(a,k)=\mathbb{R}ad_f(a,k)+\mathbb{R}eg_f(a,k)$, we can further write the estimate for the frequency in $(ii)$ as 
\begin{align}
\mathcal{W}_f(a,k)&=\frac{1}{k}\mathcal{D}_f(a,k)+\int \limits_{1}^{k-1}\frac{\mathcal{D}_f(a,t)}{t^2}dt\nonumber \\&=\frac{1}{k}\mathbb{R}ad_f(a,k)+\frac{1}{k}\mathbb{R}eg_f(a,k)+\int \limits_{1}^{k-1}\frac{\mathbb{R}ad_f(a,t)}{t^2}dt+\int \limits_{1}^{k-1}\frac{\mathbb{R}eg_f(a,t)}{t^2}dt\nonumber \\&=\frac{\mathbb{R}ad_f(a,k)}{k}+\int \limits_{1}^{k-1}\frac{\mathbb{R}ad_f(a,t)}{t^2}dt+O(\frac{1}{k})\nonumber
\end{align}
since $$
      \mathbb{R}eg_f(a,k)\ll_{f,a} 1
      $$ 
      and 
      $$
      \int \limits_{1}^{k-1}\frac{\mathbb{R}eg_f(a,t)}{t^2}dt\leq \int \limits_{1}^{\infty}\frac{\mathbb{R}eg_f(a,t)}{t^2}dt \ll_{f,a} \int \limits_{1}^{\infty}\frac{1}{t^2}dt.
      $$
\bigskip

\item [(iv)] By plugging the estimate in $(iii)$ into the upper bound 
\begin{align}
\int \limits_{1}^{k-1}\frac{|f^t(a)-f(a)|}{t^2}dt\leq \mathcal{W}_f(a,k)-\frac{\mathcal{D}_f(a,k)}{k}\nonumber
\end{align} 
the claimed upper bound also follows.
\end{enumerate}
\end{proof}
\bigskip

The estimates established in Theorem \ref{wave estimates} can be used in a unifying manner to study the convergence of any dynamical system. The estimate in $(iii)$ seems to stand out among them and confirms Proposition \ref{key theorem}. We confirm the observation again as an application of the estimate.

\begin{corollary}
Let
\begin{align}
f(a),f^2(a),f^3(a),\ldots,f^{k}(a)\nonumber
\end{align} 
be a $k$-dynamical system such that $|f^{s+1}(a)-f^{s}(a)|\neq |f^{t+1}(a)-f^t(a)|$ for all $s,t\geq 1$ with $s\neq t$ and $|f^{s+1}(a)-f^{s}(a)|,|f^{t+1}(a)-f^t(a)|\neq 0$ with corresponding sequence of dynamical balls 
\begin{align}
\mathcal{B}_{f(a)}(a), \mathcal{B}_{f^2(a)}(a),\ldots,\mathcal{B}_{f^k(a)}(a).\nonumber
\end{align}
Then $\lim \limits_{j\longrightarrow \infty}\mathcal{B}_{f^j(a)}(a)$ exists if and only if $\lim \limits_{k \longrightarrow \infty}\mathbb{R}ad_f(a,k)<\infty$.
\end{corollary}

\begin{proof}
The result follows from the estimate 
\begin{align}
\mathcal{W}_f(a,k)=\frac{\mathbb{R}ad_f(a,k)}{k}+\int \limits_{1}^{k-1}\frac{\mathbb{R}ad_f(a,t)}{t^2}dt+O(\frac{1}{k})\nonumber
\end{align}
and an appeal to Proposition \ref{wave-convergence equivalence}.
\end{proof}

\subsection{Translation and dilation of dynamical balls}

In this section, we introduce the notion of \emph{translation} and \emph{dilation} of dynamical balls. This would allow the movement of dynamical balls for the purposes of our work.

\begin{definition}
Let
\begin{align}
f(a),f^2(a),f^3(a),\ldots,f^{k}(a)\nonumber
\end{align} 
be a $k$-dynamical system with corresponding sequence of dynamical balls 
\begin{align}
\mathcal{B}_{f(a)}(a), \mathcal{B}_{f^2(a)}(a),\ldots,\mathcal{B}_{f^k(a)}(a).\nonumber
\end{align}
We call the map 
\begin{align}
\mathbb{T}_b:\mathcal{B}_{f^j(a)}(a)\longmapsto \mathcal{B}_{f^j(a+b)}(a+b):=\mathcal{B}_{f^j(\mathbb{T}_b(a)}(\mathbb{T}_b(a))\nonumber
\end{align}
the \emph{translation} of the dynamical ball $\mathcal{B}_{f^j(a)}(a)$ by a a scale factor $b$.
\end{definition}
\bigskip

\begin{definition}
Let
\begin{align}
f(a),f^2(a),f^3(a),\ldots,f^{k}(a)\nonumber
\end{align} 
be a $k$-dynamical system with corresponding sequence of dynamical balls 
\begin{align}
\mathcal{B}_{f(a)}(a), \mathcal{B}_{f^2(a)}(a),\ldots,\mathcal{B}_{f^k(a)}(a).\nonumber
\end{align}
We call the map 
\begin{align}
\mathbb{D}_m:\mathcal{B}_{f^j(a)}(a)\longmapsto \mathcal{B}_{f^j(ma)}(ma):=\mathcal{B}_{f^j(\mathbb{D}_m(a)}(\mathbb{D}_m(a))\nonumber
\end{align}
the \emph{dilation} of the dynamical ball $\mathcal{B}_{f^j(a)}(a)$ by a scale factor $m$.
\end{definition}

\begin{proposition}\label{dynamical ball convergence extension via translation}
Let
\begin{align}
f(a),f^2(a),f^3(a),\ldots,f^{k}(a)\nonumber
\end{align} 
be a $k$-dynamical system with corresponding sequence of dynamical balls 
\begin{align}
\mathcal{B}_{f(a)}(a), \mathcal{B}_{f^2(a)}(a),\ldots,\mathcal{B}_{f^k(a)}(a).\nonumber
\end{align}
Suppose that $\lim \limits_{j\longrightarrow \infty}\mathcal{B}_{f^j(b)}(b)$ exists. If $\lim \limits_{j\longrightarrow \infty}\mathcal{B}_{f^j(a)}(a)$ exists then $\lim \limits_{j\longrightarrow \infty}\mathcal{B}_{f^j(a+b)}(a+b)$ exists provided that $f^{s}(a+b)\leq f^s(a)+f^s(b)$ whenever $f^{s-1}(a+b)\geq f^{s-1}(a)+f^{s-1}(b)$ for all $s\geq 2$.
\end{proposition}

\begin{proof}
It suffices to show that for any $\epsilon>0$ there exists some $N_o>0$ such that for all $s\geq N_o$ then $|f^s(a+b)-f^{s-1}(a+b)|<\epsilon$.\\ 

Under the assumption $\lim \limits_{j\longrightarrow \infty}\mathcal{B}_{f^j(b)}(b)$ and $\lim \limits_{j\longrightarrow \infty}\mathcal{B}_{f^j(a)}(a)$ exist, then for any $\epsilon>0$ there exist some $N_o,M_o>0$ such that 
\begin{align}
|f^s(a)-f^{s-1}(a)|<\frac{\epsilon}{2} \nonumber
\end{align}
for all $s\geq N_o$ and 
\begin{align}
|f^s(b)-f^{s-1}(b)|<\frac{\epsilon}{2} \nonumber
\end{align}
for all $s\geq M_o$. Choosing $P=\mathrm{max}\{N_o,M_o\}$ and exploiting the condition $f^{s}(a+b)\leq f^s(a)+f^s(b)$ if $f^{s-1}(a+b)\geq f^{s-1}(a)+f^{s-1}(b)$ for all $s\geq 2$, it follows that 
\begin{align}
|f^s(a+b)-f^{s-1}(a+b)|\leq |f^s(a)-f^{s-1}(a)|+|f^s(b)-f^{s-1}(b)|<\frac{\epsilon}{2}+\frac{\epsilon}{2}=\epsilon \nonumber
\end{align}
for all $s\geq P=\mathrm{max}\{N_o,M_o\}$. This implies that $\lim \limits_{j\longrightarrow \infty}\mathcal{B}_{f^j(a+b)}(a+b)$ exists since $\epsilon>0$ can be chosen arbitrarily.
\end{proof}
\bigskip

\begin{proposition}\label{dynamical ball convergence extension via dilation}
Let
\begin{align}
f(a),f^2(a),f^3(a),\ldots,f^{k}(a)\nonumber
\end{align}
be a $k$-dynamical system with corresponding sequence of dynamical balls 
\begin{align}
\mathcal{B}_{f(a)}(a), \mathcal{B}_{f^2(a)}(a),\ldots,\mathcal{B}_{f^k(a)}(a).\nonumber
\end{align}
If $\lim \limits_{j\longrightarrow \infty}\mathcal{B}_{f^j(a)}(a)$ exists then $\lim \limits_{j\longrightarrow \infty}\mathcal{B}_{f^j(ma)}(ma)$ exists provided that $f^{s}(ma)\leq mf^s(a)$ whenever $f^{s-1}(ma)\geq mf^{s-1}(a)$ for all $s\geq 2$ and for a fixed $m\in \mathbb{N}$.
\end{proposition}

\begin{proof}
Under the assumption $\lim \limits_{j\longrightarrow \infty}\mathcal{B}_{f^j(a)}(a)$ exists, then for any $\epsilon>0$ there exists some $N_o>0$ such that 
\begin{align}
|f^s(a)-f^{s-1}(a)|<\epsilon \nonumber
\end{align}
for all $s\geq N_o$ so that under the conditions $f^{s}(ma)\leq mf^s(a)$ whenever $f^{s-1}(ma)\geq mf^{s-1}(a)$ for all $s\geq 2$ and for a fixed $m\in \mathbb{N}$, we can write by choosing $\epsilon=\frac{\delta}{m}$ for any $\delta>0$
\begin{align}
|f^s(ma)-f^{s-1}(ma)|\leq m|f^s(a)-f^{s-1}(a)|<m\epsilon=\delta \nonumber
\end{align}
for all $s\geq N_o$.
\end{proof}

\begin{remark}
The proposition \ref{dynamical ball convergence extension via translation} and \ref{dynamical ball convergence extension via dilation} provides a slick way to extend the convergence of an infinite dynamical system induced by a function $f$ on any $a\in \mathbb{N}$ to some other numbers $z\in \mathbb{N}$ by translation.
\end{remark}

\footnote{
\par
.}%

\bibliographystyle{amsplain}

\end{document}